\providecommand{\U}[1]{\protect\rule{.1in}{.1in}}
\providecommand{\U}[1]{\protect\rule{.1in}{.1in}}
\theoremstyle{plain}
\numberwithin{equation}{section}
\newtheorem{theorem}{Theorem}[section]
\newtheorem{lemma}[theorem]{Lemma}
\newtheorem{proposition}[theorem]{Proposition}
\newtheorem{corollary}[theorem]{Corollary}
\newtheorem{claim}[theorem]{Claim}
\theoremstyle{definition}
\newtheorem{definition}{Definition}[section]
\theoremstyle{remark}
\newtheorem*{claim*}{Claim}
\newtheorem*{example*}{Example}
\newtheorem*{remark*}{Remark}
\newtheorem{remark}{Remark}[section]
\DeclareMathOperator{\vol}{vol}
\begin{document}
\title[Calibrating optimal transportation]{Pseudo-Riemannian geometry calibrates optimal transportation}
\author{Young-Heon Kim, Robert J. McCann and Micah Warren}
\address{YHK:Department of Mathematics, University of British Columbia, Vancouver BC
Canada and Institute for Advanced Study, Princeton NJ USA. \\
RJM:  Department of Mathematics, University of Toronto, Toronto, Ontario
Canada. MW:  Department of Mathematics, Princeton University, Princeton NJ, USA}
\email{yhkim@math.ubc.ca,mccann@math.toronto.edu,mww@princeton.edu}
\thanks{R.J.M.'s research was supported in part by Natural Sciences and Engineering
Research Council of Canada Grant 217006-08 and United States National Science
Foundation Grant DMS-0354729. Y-H.K. is supported partly by NSF Grant
DMS-0635607 (through the membership at Institute for Advanced Study) and NSERC
discovery grant 371642-09. \ \ M.W. is supported in part by NSF Grant
DMS-0901644. \ Any opinions, findings and conclusions or recommendations
expressed in this material are those of authors and do not reflect the views
of either Natural Sciences and Engineering Research Council of Canada or the
US National Science Foundation. }
\thanks{\indent\copyright 2010 by the authors. }
\date{\today }
\maketitle

\begin{abstract}
Given a transportation cost $c:M\times\bar{M}\rightarrow\mathbf{R}$, optimal
maps minimize the total cost of moving masses from $M$ to $\bar{M}$. We find,
explicitly, a pseudo-metric and a calibration form on $M\times\bar{M}$ such
that the graph of an optimal map is a calibrated maximal submanifold. We
define the mass of space-like currents in spaces with indefinite metrics.

\end{abstract}

\section{Introduction}

The aim of this article is to adapt the notion of calibration (see \cite{HL}
\cite{Me}) to a pseudo-Riemannian framework constructed to describe and
explore the geometry of optimal transportation from a new perspective.

Given a smooth function $c:M\times\bar{M}\rightarrow\mathbf{R}$ (called the
transportation cost), and probability densities $\rho$ and $\bar{\rho}$ on two
manifolds $M$ and $\bar{M}$ (possibly with boundary), a natural variational
problem is to find an optimal map $F:M\rightarrow\bar{M}$ that minimizes the
total cost%
\begin{equation}
\int_{M}c(x,F(x))\rho(x)dx \label{Monge}%
\end{equation}
under the constraint that for every measurable function $f$ on $\bar{M}$,
\begin{equation}
\int_{M}f(F(x))\rho(x)dx=\int_{\bar{M}}f(\bar{x})\bar{\rho}(\bar{x})d\bar{x}.
\label{push-forward}%
\end{equation}
The last condition will be denoted by $F_{\#}\rho=\bar{\rho}$, in which case
we say $F$ \emph{pushes} $\rho$ \emph{ forward }to $\bar{\rho}$. This
variational problem, called optimal transportation, dates back to Monge in
18th century and is currently undergoing a rapid and broad development,
especially in relation to geometry; see Villani's recent book and the
references it contains \cite{Villani}. {}

We introduce a metric on $M\times\bar{M}$ via the following symmetric bilinear
form on $TM\times T\bar{M}$, $\ $defined on a coordinate frame:%
\begin{equation}
h_{c}^{\rho,\bar{\rho}}=\left(  \frac{\rho(x)\bar{\rho}(\bar{x}))}{\left\vert
\det(D\bar{D}c(x,\bar{x}))\right\vert }\right)  ^{\frac{1}{n}}\left(
\begin{array}
[c]{cc}%
0 & -D\bar{D}c\\
-\left(  D\bar{D}c\right)  ^{T} & 0
\end{array}
\right)  \label{metric}%
\end{equation}
and calibration

{%
\begin{equation}
\Phi(x,\bar{x})=\frac{\rho(x)dx+\bar{\rho}(\bar{x})d\bar{x}}{2}.\label{cali}%
\end{equation}
}

Here $D\bar{D}c$ denotes second derivatives of the cost function in mixed
directions. Our central result, Theorem \ref{th:main} can be stated
imprecisely as follows:

\begin{theorem}
\label{thm:imprecise}Under reasonable conditions, the graph of the optimal map
is calibrated by $\Phi,$ and is thus a stable maximal surface with respect
to\ the pseudo-metric $h_{c}^{\rho,\bar{\rho}}$ on $M\times\bar{M}$. \ \ 
\end{theorem}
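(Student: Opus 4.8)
The plan is to make the imprecise statement precise by recalling the variational characterization of $c$-optimal maps and then verifying the two defining properties of a calibration—that $\Phi$ restricts to the volume form on the graph, and that it dominates the volume form on all space-like $n$-planes. First I would fix the standard hypotheses (the twist/non-degeneracy condition that $D\bar D c$ is invertible, so that $h_c^{\rho,\bar\rho}$ is a genuine pseudo-metric of split signature $(n,n)$, together with the regularity conditions under which the optimal map $F$ exists, is single-valued, and is given by $F(x)=\bar D c(x,\cdot)^{-1}(-Du(x))$ for a $c$-convex potential $u$ solving the Monge–Amp\`ere type equation $\det(D^2 u + D\bar D c) = \rho(x)/\bar\rho(F(x)) \cdot |\det D\bar D c|$). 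The graph $\{(x,F(x))\}$ is then an $n$-dimensional submanifold of $M\times\bar M$, and the first task is to compute the induced metric: pulling back $h_c^{\rho,\bar\rho}$ along $x\mapsto(x,F(x))$ produces the symmetric form $-\big(\frac{\rho\bar\rho}{|\det D\bar D c|}\big)^{1/n}\big(D\bar D c \cdot DF + (D\bar D c\cdot DF)^T\big)$, and using the potential one identifies $-D\bar D c\cdot DF = D^2u + D\bar D c$ (evaluated along the graph), which is symmetric and—by $c$-convexity of $u$—positive semidefinite, so the graph is space-like. Its Riemannian volume form works out, via the Monge–Amp\`ere equation, to exactly $\rho(x)\,dx$ (equivalently $\bar\rho(\bar x)\,d\bar x$ pulled back), which already matches the restriction of $\Phi$.

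Next I would check the calibration inequality. The form $\Phi = \tfrac12(\rho\,dx + \bar\rho\,d\bar x)$ is closed (each summand is a pullback of a top form from a factor, hence closed), so the comass condition is the only substantive point: for every space-like oriented $n$-plane $P\subset T_{(x,\bar x)}(M\times\bar M)$ one must show $\Phi|_P \le \mathrm{vol}_{h}|_P$, with equality exactly on the tangent planes to graphs of the form described. A space-like $n$-plane that projects isomorphically to $TM$ is the graph of a linear map $A:T_xM\to T_{\bar x}\bar M$ with $-D\bar D c\cdot A$ symmetric positive definite; on such a plane $\rho\,dx$ restricts to $\rho\,dx$ and $\bar\rho\,d\bar x$ restricts to $\bar\rho\det A\,dx$, while $\mathrm{vol}_h$ restricts to $\big(\frac{\rho\bar\rho}{|\det D\bar D c|}\big)^{1/2}\sqrt{\det(-D\bar D c\cdot A)}\cdot\sqrt{\det(-(D\bar D c)^T\cdot A^T)}\,dx$—after simplification a multiple of $\sqrt{\rho\bar\rho\det A}\,dx$ (with the appropriate sign/orientation). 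The inequality $\tfrac12(\rho + \bar\rho\det A) \ge \sqrt{\rho\bar\rho\det A}$ is then just the arithmetic–geometric mean inequality applied pointwise, with equality iff $\rho = \bar\rho\det A$, which is precisely the Jacobian/mass-balance condition satisfied by $DF$ along the optimal graph. Planes that do not project isomorphically to $TM$ need a separate, easier argument (degenerate or vanishing restriction of $\Phi$), and one should also confirm the AM–GM bound persists after allowing arbitrary space-like planes, not merely graphical ones.

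The main obstacle I anticipate is purely linear-algebraic bookkeeping: getting the signs, orientations, and the $n$-th-root normalization in \eqref{metric} to interact correctly so that the volume form of the induced metric is exactly $\rho\,dx$ and the comass of $\Phi$ is exactly $1$ on the calibrated planes—the conformal factor is engineered for precisely this, but verifying it requires care with $\det(D\bar D c)$, its transpose, and the orientation convention on $M\times\bar M$ that makes $h_c^{\rho,\bar\rho}$ consistently split-signature. A secondary subtlety is phrasing ``maximal'' and ``stable'': once the calibration identity holds, the graph maximizes $n$-dimensional mass among space-like competitors in its homology class (by the usual Stokes-theorem comparison $\mathrm{mass}(S') \ge \int_{S'}\Phi = \int_S \Phi = \mathrm{mass}(S)$, reversed appropriately for the mass of space-like currents defined in the paper), hence is a critical point of the volume functional with the correct second-variation sign; I would state this cleanly using the notion of mass for space-like currents promised in the abstract, and defer its foundational properties to the later sections. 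Finally I would record the converse direction—that a space-like calibrated graph must be the graph of a $c$-optimal map—since equality in AM–GM forces the pointwise Jacobian identity, which together with $c$-convexity characterizes optimality.
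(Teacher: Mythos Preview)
Your overall architecture matches the paper's, but there is a genuine gap in the comass step. You assert that a space-like $n$-plane graphed over $T_xM$ by a linear map $A$ has $-D\bar D c\cdot A$ \emph{symmetric} positive definite. This is false: setting $B=-D\bar D c\cdot A$, the induced metric on the plane is (up to the conformal factor) $\tfrac12(B+B^T)$, so space-like means only that the \emph{symmetric part} of $B$ is positive definite. Symmetry of $B$ itself is a separate condition---it is precisely the Lagrangian condition for $\omega_c$---and generic space-like planes do not satisfy it. Consequently your volume formula
\[
\sqrt{\det(-D\bar D c\cdot A)}\cdot\sqrt{\det(-(D\bar D c)^T\cdot A^T)}\,dx
\]
is wrong for non-Lagrangian planes: the correct volume element involves $\sqrt{\det\tfrac12(B+B^T)}$, not $\sqrt{\det B\cdot\det B^T}=|\det B|$. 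Your AM--GM step therefore establishes the calibration inequality only on Lagrangian space-like planes, and the comass of $\Phi$ is not shown to be at least $1$.

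The paper closes exactly this gap with a linear-algebra lemma (cited from \cite{Warren}, Lemma~3.1, and \cite{Me}): if $\langle Bv,v\rangle\ge 0$ for all $v$ then $\det\tfrac12(B+B^T)\le\det B$, with equality iff $B=B^T$. This yields
\[
\sqrt{\det g}\;\le\;\sqrt{\rho\,\bar\rho\,\det DF}\;\le\;\tfrac12\bigl(\rho+\bar\rho\,\det DF\bigr),
\]
the first inequality forcing the Lagrangian condition (\ref{eq:graph is Lagrangian}) at equality and the second (your AM--GM) forcing the Jacobian condition (\ref{eq:measure preserving}). Both are needed to characterize the calibrated planes; your equality case records only the second. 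A minor additional point: in the indefinite setting the calibration inequality reads $\Phi(\xi)\ge\|\xi\|_h$ (Definition~\ref{def:calibration}), not $\Phi|_P\le\mathrm{vol}_h|_P$ as you wrote---your final displayed inequality is in the correct direction, so this is presumably a slip, but be consistent. Finally, strictly space-like $n$-planes always project isomorphically to $T_xM$ (a vector $(0,\bar v)$ is null for $h_c$), so your ``separate, easier argument'' is unnecessary.
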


A new geometric aspect of optimal transportation was observed by the first two
authors \cite{KM}. Namely, the transportation cost $c$ induces a certain
pseudo-metric on $M\times\bar{M}$, \eqref{eq:h}, in such a way that positivity
of its Riemannian curvature tensor on certain sections gives a necessary
condition for the regularity of general optimal maps $F$. This result gives a
geometric perspective on the fundamental regularity theory developed for
optimal maps by Ma, Trudinger and Wang \cite{MTW} \cite{TW} and Loeper
\cite{Loeper}. Moreover, the graph of the optimal map $F$ is a Lagrangian
submanifold with respect to the K\"{a}hler form of the pseudo-metric (see
Section~\ref{S:optimal} or \cite{KM} for more details).

Mealy \cite{Me} developed the idea of calibration on pseudo-Riemannian
manifolds, and introduced the special Lagrangian equations for signature
$(n,n)\ $metrics:
\[
\sum_{k=0}^{\lfloor(n-1)/2\rfloor}\sigma_{2k+1}(D^{2}\phi)=0\ \ \text{ (here,
$\sigma_{k}$ denotes the $k$-symmetric function).}%
\]
These describe functions whose gradient graphs $(x,\nabla\phi(x))$ are maximal
surfaces in the \textquotedblleft standard" pseudo-Euclidean space, $(%
\mathbb{R}
^{n+n},dx^{2}-dy^{2}),$ \ provided that
\[
\left\vert D^{2}\phi\right\vert <1
\]
which is equivalent to the graph being spacelike. \ After Hitchin \cite[Prop.
3]{Hitchin} connected Monge-Amp\`{e}re equations and special Lagrangian
submanifolds in a pseudo-Euclidean space, the third author \cite{Warren}
explored these equations, which essentially describe Mealy's submanifolds
under a coordinate change. \ In Hitchin's coordinate setting the gradient map
of a convex function $\phi$ solving the standard Monge-Amp\`{e}re equation
\[
\det D^{2}\phi=1,
\]
gives a calibrated submanifold in the product space $\mathbf{R}^{n}%
\times\mathbf{R}^{n}$ equipped with a pseudo-Euclidean metric, $dx\cdot dy.$
\ \ In fact,  Hitchin's definition of special Lagrangian is that a linear
combination of certain volume forms vanish\ along a Lagrangian submanifold.
\ Our calibration (\ref{cali}) encodes precisely this definition for the
volume forms $\rho dx$ and $\bar{\rho}d\bar{x}.$

In this paper, we provide a framework that combines and extends these results
by providing the pseudo-metric defined in ~(\ref{metric}), which is
conformally equivalent to the pseudo-metric in \cite{KM} but depends on $\rho$
and $\bar{\rho}$ in addition to $c$. With respect to this metric, the graphs
of optimal maps are calibrated, thus give maximal submanifolds (see
Theorem~\ref{th:main} and Corollary~\ref{cor:mean curv}). This demonstrates
how the functional extremality of optimal maps for
\eqref{Monge}--\eqref{push-forward} characterizes in a natural way the
geometric extremality of their graphs.

Mealy's calibrations compare admissible (spacelike, in our case of interest)
smooth submanifolds to other admissible smooth submanifolds, but we would like
a more general setting allowing for\ spacelike submanifolds which are merely
rectifiable. \ \ 

Solutions to the optimal transportation problem are not in general smooth.
Allowing for nonsmooth solutions, we will use the language of currents, which
requires that we verify that the submanifolds in question are rectifiable.
\ This has been shown recently in \cite{MPW}, given assumption (A2), using the
Lewy transformation, which returns us to a situation which locally looks like
Mealy's,\ where the submanifolds in question are automatically Lipschitz
graphs. The development of currents generalizes some results for the Euclidean
cost function appearing in \cite{AA}.\ \ The behavior of solutions to the
problem of optimal transportation for general measures on general metric
spaces can be quite wild. It is unclear how much of this theory can be adapted
to more general cases.

\ Our use of currents will require some apparently new definitions of mass for
currents in a pseudo-Riemannian manifold, see definition~\ref{def:comass}%
.\ While the corresponding notions of mass from Euclidean geometric measure
theory are expressed in terms of suprema, we will need to\ express mass in
terms of infima, and will also need some sort of space orientation for these
definitions to make sense. These definitions recover the volume of a smooth
spacelike submanifold.

After laying down some pseudo-Riemannian geometric measure theory in section
2, we will define calibrations on spaces with pseudo-metrics. \ In the final
section we show how the optimal transportation problem fits into this setting.

The calibrated submanifolds with the metric we are considering are not in
general graphs of the optimal maps, rather these are graphs of Lie solutions
as developed by Delanoe \cite{D} . \ The moduli space of such calibrated
submanifolds is discussed in \cite{W1}\ .

The authors are thankful to Reese Harvey and the referee, who pointed out the
reference \cite{Me} as well as other helpful suggestions, also to Blaine
Lawson for his interest and helpful discussions, and to Christina Sormani for
helpful suggestions. \ 

\section{Spacelike Currents in an Indefinite (pseudo) Metric}

We formulate some definitions which adapt the geometric measure theoretic
notion of \emph{mass} to oriented manifolds with indefinite metric. This
notion will allow us to compare the mass of calibrated currents to homologous
currents in Section~\ref{S:optimal}. \ 

Let $(N^{n,m},h,\tau)$ denote a smooth $(n+m)$-dimensional manifold, a metric
with signature $(n,m)$ and a space orientation $n$-form $\tau.$ \ An oriented
$n$-plane $P$ is spacelike {if $h|_{P}>0,$ and is $\tau$-oriented if }%
$\tau(\xi)>0$ for any $n$-vector $\xi\in\Lambda^{n}T_{x}N$ representing $P.$
\ \ Let $G_{+}(n,T_{x}N)$ denote the collection of $n$-vectors which are both
spacelike and {$\tau$-oriented. (To be clear, }$G_{+}${ is a cone, not a
Grassmannian.)\ \ \ Note that }the \textquotedblleft unit sphere" of
$n$-vectors in an indefinite metric will have distinct connected components,
{\ since each such plane comes with two orientations distinguished by the sign
of the space orientation form $\tau$} (see
Proposition~\ref{prop:space orientation disconnected}). \ Here, as in many
other of our arguments, it is crucial that $n$ is the maximum dimension of any
spacelike plane. \ An element $\xi\in\Lambda^{n}T_{x}N$ $\ $is called
\emph{simple} if it is a single product of $n$ vectors, i.e. $\xi=v_{1}%
\wedge\ldots\wedge v_{n}.$ \ \ For a simple $\xi\in G_{+}(n,T_{x}N)$ we will
be interested in {the $h$-norm of }$\xi${, which is the} $n$-dimensional
$h$-volume defined by
\begin{equation}
\Vert\xi\Vert_{h}:=\sqrt{\det(h(v_{i},v_{j}))_{1\leq i,j\leq n}}.
\label{n-volume}%
\end{equation}

In the following, a \emph{current} shall mean a de Rham $n$-current with
compact support on $N,$ that is, an element of the dual space $(\Omega
^{n}N)^{\ast}$ of the space $\Omega^{n}N$ of smooth $n$-forms on
$N$.\ \ Recall that a compact oriented $n$-submanifold with locally finite
Hausdorff measure defines a linear functional on the space of $n$-forms by
integration, thus a "reasonable" nonsmooth submanifold is also a current, in a
similar way. The support of current $T$ is given by the following: \ Let $U$
be the largest open set having the property that $T(\varphi)=0$ whenever
$\varphi$ is compactly supported inside $U$. \ The \emph{support} of $T,$
spt$(T),$ is the complement of $U.$ \ \ 

\begin{definition}
[Mass of a current; comass of a form]\label{def:comass} Define the set of
simple $\tau$-\emph{oriented} space-like unit $n$-planes by
\[
\mathcal{P}_{x}=\ \{\xi\in\Lambda^{n}T_{x}N|\text{ }\xi=v_{1}\wedge
\ldots\wedge v_{n}\text{, }\tau(\xi)>0,h|_{span(v_{1},\ldots,v_{n}%
)}>0,\left\Vert \xi\right\Vert _{h}=1\}.
\]
Define the \emph{oriented comass} of an $n$-form $\psi_{x}$ at a point via
\[
\left\Vert \psi_{x}\right\Vert _{h}^{\ast}=\inf_{\xi\in\mathcal{P}_{x}}%
\psi_{x}(\xi).
\]
On any set $U\subset N,$ define the \emph{oriented comass }of a $n$-form
$\psi$ on $U$ as%
\[
\left\Vert \psi\right\Vert _{(U,h)}^{\ast}=\inf_{x\in U}\left\Vert \psi
_{x}\right\Vert _{h}^{\ast}.
\]
Now define the \emph{oriented} \emph{mass} of a current $T\in$ $(\Omega
^{n}U)^{\ast}$ by
\[
\left\Vert T\right\Vert _{(U,h)}=\inf_{\left\Vert \psi\right\Vert
_{(U,h)}^{\ast}\geq1}T(\psi).
\]

\end{definition}

Some remarks about the definitions: \ First, since the space $\mathcal{P}_{x}$
is noncompact, we observe that the values of all of the infima in
Definition~\ref{def:comass} may be $-\infty.$ \ Time-like or negatively
oriented planes are given infinite weight, thus any current with enough
time-like or negatively oriented planes is given a mass of $-\infty$.
\ Fortunately, this rules out certain pathologies that occur for
pseudo-metrics (see examples in \cite[section 4]{Warren}), and recovers
expected values of mass for smooth space-like sets.

For Riemannian (thus positive definite) metrics, a calibration is a closed
$p$-form $\Psi$ such that for all $p$-vectors $\xi,$ $\Psi_{x}(\xi
)\leq\left\Vert \xi\right\Vert $ \ (see \cite{HL}). \ A calibrated current $S$
is one for which $S(\Psi)=\left\Vert S\right\Vert .$ \ It follows from Stokes'
theorem that smooth calibrated manifolds are minimal. \ In a pseudometric, a
calibration needs to give the opposite inequality. To be precise, we define:

\begin{definition}
[Calibration for indefinite metric]\label{def:calibration} A calibration on
$(N^{n,m},h,\tau)$ is an $n$-form $\Psi$ such that
\[
d\Psi=0
\]
and
\[
\Psi_{x}(\xi)\geq\left\Vert \xi\right\Vert _{h}%
\]
at each point $x,$ for each space-like tangent plane $\xi.$ Notice that this
latter condition is equivalent to $\Vert\Psi\Vert_{h}^{\ast}\geq1$.
\end{definition}

\begin{proposition}
Let $S$ be a smooth, oriented, compact $n$-dimensional space-like submanifold
{of $N$}. Let $\vol_{h}(S)$ denote the volume of $S$ with respect to the
pseudo-metric $h$. Then, $\vol_{h}(S)=\Vert S\Vert_{(N,h)}$.
\end{proposition}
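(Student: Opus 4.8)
The plan is to prove the two inequalities $\vol_h(S) \ge \|S\|_{(N,h)}$ and $\vol_h(S) \le \|S\|_{(N,h)}$ separately, in both cases working with the natural volume form $\omega_S$ along $S$. Recall that for a smooth oriented spacelike submanifold, the induced metric $h|_{TS}$ is positive definite, so $S$ carries a genuine Riemannian volume form, and by the definition \eqref{n-volume} the unit $n$-vector $\xi(x)$ tangent to $S$ at $x$ (oriented so that $\tau(\xi(x)) > 0$, which we may assume after checking $S$ is $\tau$-oriented, or else the statement should be read with that hypothesis) satisfies $\|\xi(x)\|_h = 1$, i.e. $\xi(x) \in \mathcal{P}_x$.

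For the easy direction $\vol_h(S) \le \|S\|_{(N,h)}$: I would show that any $n$-form $\psi$ with $\|\psi\|^*_{(N,h)} \ge 1$ satisfies $\int_S \psi \ge \vol_h(S)$. Indeed, at each $x \in S$ we have $\psi_x(\xi(x)) \ge \|\psi_x\|^*_h \ge 1$ since $\xi(x) \in \mathcal{P}_x$, and integrating the pointwise bound $\psi_x(\xi(x))\, d\vol_h \ge d\vol_h$ over $S$ gives $S(\psi) = \int_S \psi \ge \vol_h(S)$. Taking the infimum over all such $\psi$ yields $\|S\|_{(N,h)} \ge \vol_h(S)$.

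For the reverse direction $\vol_h(S) \ge \|S\|_{(N,h)}$, the strategy is to produce, for every $\varepsilon > 0$, an admissible $n$-form $\psi$ (closed is \emph{not} required here — the mass of a current only tests against all $n$-forms with comass $\ge 1$, not closed ones) with $\|\psi\|^*_{(N,h)} \ge 1$ and $\int_S \psi \le \vol_h(S) + \varepsilon$. The natural candidate is an $n$-form $\psi$ that, in a tubular neighborhood of $S$, restricts on the spacelike Grassmannian cone to something comparable to the $h$-volume: concretely, one wants $\psi$ so that $\psi_x(\eta) \ge \|\eta\|_h$ for all $\eta \in G_+(n,T_xN)$ near $S$, with equality (or near-equality) on $\xi(x)$. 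Locally, in coordinates adapted so that $h$ looks like the standard signature-$(n,m)$ form and $S$ is the graphical spacelike slice, the $h$-comass function $\eta \mapsto \|\eta\|_h$ restricted to the spacelike cone is the infimum defining $\|\cdot\|^*$, and one checks that $\psi$ can be chosen (using a partition of unity to globalize, and the fact that spacelike planes near $T_xS$ form an open subcone on which one has good control) so that its pointwise oriented comass is $\ge 1$ everywhere and its pullback to $S$ is within $\varepsilon$ of the volume form. The main obstacle is precisely this globalization/extension step: one must verify that the oriented comass condition $\|\psi_x\|^*_h \ge 1$ — an infimum over a \emph{noncompact} cone $\mathcal{P}_x$, where timelike directions are nearby and carry the risk of driving the infimum to $-\infty$ — can be maintained while keeping $\psi$ close to the volume form of $S$, and that patching local models via a partition of unity does not destroy this. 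I expect one resolves this by exploiting that the relevant infimum, on the closure of a small enough spacelike subcone around $T_xS$, is attained and behaves continuously, so that a small perturbation of the exact local volume-calibrating form still has comass $\ge 1 - o(1)$, and rescaling by $1 + o(1)$ restores admissibility at negligible cost to $\int_S \psi$.
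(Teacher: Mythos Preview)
Your two-inequality structure is correct, and your proof of the easy direction $\vol_h(S)\le\|S\|_{(N,h)}$ matches the paper's reasoning exactly.

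For the hard direction, however, you have a genuine gap. You correctly flag the obstacle---the comass is an infimum over the \emph{noncompact} cone $\mathcal{P}_x$, so one cannot simply take an arbitrary smooth extension of the volume form and hope the comass stays bounded below---but your proposed fix (controlling the infimum on a small subcone near $T_xS$ and perturbing) does not address the actual difficulty: the comass condition $\|\psi_x\|_h^*\ge 1$ requires $\psi_x(\eta)\ge 1$ for \emph{every} $\eta\in\mathcal{P}_x$, including spacelike planes far from $T_xS$, and nothing in your argument controls those. Restricting to a subcone and invoking continuity is not enough.

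The paper resolves this cleanly with a specific algebraic fact you are missing: at each $x$, the $n$-form dual (via the $h$-inner product on $\Lambda^n T_xN$) to the unit tangent $n$-vector $\xi(x)$ of $S$ already has pointwise comass exactly $1$, with the infimum achieved precisely at $\xi(x)$. This is the pseudo-Riemannian analogue of the Cauchy--Schwarz/Hadamard bound and is taken from Mealy \cite[p.~797]{Me}; it is the crux of the construction and renders your $\varepsilon$-approximation unnecessary. One then extends this smooth form off $S$, normalizes by the pointwise comass where positive, and---a second point you omit---patches globally using a partition of unity together with the observation that the comass, being an infimum, is \emph{superadditive}: $\|\sum_i\lambda_i\psi_i\|_h^*\ge\sum_i\lambda_i\|\psi_i\|_h^*$ for $\lambda_i\ge 0$. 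This superadditivity is what lets one combine local comass-$1$ forms (near $S$ and on charts away from $S$) into a global comass-$\ge 1$ form without spoiling admissibility; a generic partition-of-unity argument would not preserve a supremum-type bound, but it does preserve this infimum-type one.
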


\begin{proof}
At each $x,$ for a given spacelike {simple $n$-vector $\xi\in\Lambda^{n}%
T_{x}N$}, the form given by the $h$-inner product with $\xi$ has comass $1$
(See \cite[pg 797]{Me} ). This form depends smoothly on the tangent space to
$S$. {(The corresponding simple $n$-vector field is obtained by choosing a
smooth frame field and taking the $n$-product of the basis vectors.)} \ Thus
we can find a form $\psi\in\Omega^{n}S$ such that for each $x\in S$, $\psi
_{x}(\varsigma)\geq\Vert\varsigma\Vert_{h}$ for all $\varsigma\in
\mathcal{P}_{x}$ with equality when and only when $\varsigma$ represents the
oriented tangent plane to $S.$ \ Extend this smoothly to a form $\psi_{0}$
which has positive comass on a neighborhood $V_{0}$ of $S$. \ \ Dividing by
the pointwise comass at each point where the comass is positive, we have that
$\left\Vert \psi_{0}\right\Vert _{(V_{0},h)}^{\ast}=1.$ Next, take a cover of
$N$ with open sets $V_{i,}$ so that $V_{i}\cap S=\emptyset,$ for $i\geq1,$ and
so that on each $V_{i,}$ there is a $\psi_{i}$ with $\left\Vert \psi
_{i}\right\Vert _{\left(  V_{i},h\right)  }^{\ast}\geq1$. \ Such $V_{i\text{
}}$ are easily locally available. Noting that the comass is superadditive, we
may sum over a partition of unity and get a form, which we call $\psi$, with
$\left\Vert \psi\right\Vert _{\left(  N,h\right)  }^{\ast}\geq1.$ Also, for
any $\varphi$ with $\left\Vert \varphi\right\Vert _{(N,h)}^{\ast}\geq1,$
$S(\varphi)\geq S(\psi),$ by the choice of $\psi.$ It follows that {%
\[
\inf_{\left\Vert \varphi\right\Vert _{(N,h)}^{\ast}\geq1}S(\varphi
)=S(\psi)=\int_{S}\psi=\int_{S}d\vol_{h}(S).
\]
}
This completes the proof.
\end{proof}

From the above proposition, it is legitimate to use the notion of mass of
currents to compare volumes of smooth space-like submanifolds.

We conclude this section with a remark:

\begin{proposition}
\label{prop:space orientation disconnected} \bigskip Let $h$ be an indefinite
metric on $%
\mathbb{R}
^{n+m\text{ }}$ of the form $dx^{2}-dy^{2}.$ The \textquotedblleft unit
sphere" $\Vert\xi\Vert_{h}=1$ of space-like $n$-planes is disconnected. \ 
\end{proposition}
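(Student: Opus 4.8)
The plan is to exhibit two spacelike unit $n$-planes in $(\mathbb{R}^{n+m}, dx^2 - dy^2)$ that cannot be joined by a continuous path of spacelike unit $n$-planes, using the space orientation form $\tau$ (which we may take to be $dx_1 \wedge \cdots \wedge dx_n$) as the locally constant invariant that obstructs connectedness. First I would fix the coordinate $n$-plane $P_0 = \operatorname{span}(\partial_{x_1}, \ldots, \partial_{x_n})$, with the two unit simple $n$-vectors $\xi_+ = \partial_{x_1} \wedge \cdots \wedge \partial_{x_n}$ and $\xi_- = \partial_{x_2} \wedge \partial_{x_1} \wedge \partial_{x_3} \wedge \cdots \wedge \partial_{x_n} = -\xi_+$. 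Both are spacelike (the induced form is the Euclidean one, hence positive definite) and both have $h$-norm $1$, but $\tau(\xi_+) = 1 > 0$ while $\tau(\xi_-) = -1 < 0$.

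Next I would show that the sign of $\tau$ is constant along any continuous path in the unit sphere of spacelike simple $n$-vectors, so that $\xi_+$ and $\xi_-$ lie in different components. The key point — and the one place a small argument is needed — is that $\tau$ never vanishes on a spacelike $n$-plane. Suppose $P = \operatorname{span}(v_1, \ldots, v_n)$ is spacelike and write each $v_i = (a_i, b_i)$ with $a_i \in \mathbb{R}^n$, $b_i \in \mathbb{R}^m$. Then $\tau(\xi) = \det(a_1, \ldots, a_n)$ up to the normalization, so I must argue that the $a_i$ are linearly independent whenever $h|_P > 0$. Indeed, if some nontrivial combination $\sum c_i v_i = (0, b)$ had vanishing $x$-component, then $h(\sum c_i v_i, \sum c_i v_i) = -|b|^2 \le 0$, contradicting positive-definiteness of $h|_P$ unless the combination is zero; hence the projection of $P$ to the $x$-factor is an isomorphism and $\det(a_1,\ldots,a_n) \ne 0$. (This is exactly the standard fact that a spacelike plane is a graph over the positive-definite factor, and it is where maximality of $n$ as the dimension of a spacelike plane enters.) Therefore $\tau$, being continuous and nonzero on the connected set of unit spacelike simple $n$-vectors representing any fixed orientation class, has constant sign on each path-component; since $\xi_+$ and $\xi_-$ realize both signs, the unit sphere is disconnected.

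Finally I would note that this produces at least two components; in fact, for $n \ge 1$ and $m \ge 1$ the set is nonempty (it contains $\xi_+$), so both the $\tau > 0$ and $\tau < 0$ strata are nonempty, and the proposition follows. The only subtlety to watch is the passage from simple $n$-vectors to planes and back — a continuous path of unit spacelike $n$-vectors is what we track, and the sign of $\tau$ on it is what we show is locally constant — so I would phrase the statement and the path-lifting at the level of $n$-vectors throughout, which is harmless since every spacelike $n$-plane does admit a simple representing $n$-vector. I expect the main (minor) obstacle to be making precise that "the unit sphere is disconnected" refers to this cone of simple $n$-vectors rather than to a Grassmannian, consistent with the remark preceding Definition~\ref{def:comass}; with that understood, the graph-over-the-$x$-factor lemma does all the work.
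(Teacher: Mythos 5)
Your proposal is correct and follows essentially the same route as the paper: both prove that the projection of a spacelike $n$-plane onto the positive-definite $\mathbb{R}^n$ factor is injective (equivalently, that $\tau=\det\circ\pi$ is nonvanishing on spacelike planes), and then use the sign of $\tau$ as a locally constant invariant taking both values. You spell out two steps that the paper leaves implicit — the short computation $h(\sum c_i v_i,\sum c_i v_i)=-|b|^2\le 0$ showing the projection is an isomorphism, and the explicit exhibition of $\xi_+$ and $\xi_-=-\xi_+$ realizing both signs — but these are just filled-in details, not a different argument.
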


\begin{proof}
Let $\xi$ be an $n$-plane given by $v_{1}\wedge\cdots$ $\wedge v_{n}.$ \ The
projection $\pi$ of the independent set $\{v_{i}\}$ onto $%
\mathbb{R}
^{n}$ gives an independent set, otherwise the plane would contain time-like or
null vectors. \ Define $\tau(P)=\det(\pi(v_{1}\wedge\cdots$ $\wedge v_{n}))$.
$\ \ $It is clear that the alternating form $\tau~$will be either positive or
negative on any space-like plane.
\end{proof}

\section{\bigskip Application to Optimal Transportation}

\label{S:optimal} { Let $M$, $\bar{M}$ be oriented $n$-dimensional manifolds
and let $\rho$, $\bar{\rho}$ be smooth volume forms on $M$, $\bar{M}$,
respectively, with $\int_{M}\rho=\int_{\bar{M}}\bar{\rho}=1$. In oriented
local coordinates $(x^{1},\cdots,x^{n},\bar{x}^{1},\cdots\bar{x}^{n})$ of
$(x,\bar{x})\in M\times\bar{M}$, we have the expression $\rho=\rho(x)dx$,
$\bar{\rho}=\bar{\rho}(\bar{x})d\bar{x}$, $\rho(x),\bar{\rho}(\bar{x})>0$,
where $dx=dx^{1}\wedge\cdots\wedge dx^{n}$, $d\bar{x}=d\bar{x}^{1}\wedge
\cdots\wedge d\bar{x}^{n}$. Define an $n$-form $\Phi$ on $M\times\bar{M}$ in
local coordinates (see upcoming subsection \ref{S:orientation}) by
\begin{equation}
\Phi(x,\bar{x})=\frac{\rho(x)dx+\bar{\rho}(\bar{x})d\bar{x}}{2}.
\label{def:cal}%
\end{equation}
Notice that we could write this in the invariant form }%
\[
\Phi=\frac{\pi^{\ast}\rho+\bar{\pi}^{\ast}\bar{\rho}^{\ast}}{2}%
\]
where { }$\pi^{\ast},\bar{\pi}^{\ast}$ are pullbacks of the projection mapping
to $M$ and $\bar{M}$ respectively. \ In this section, we show the compactly
supported portions of the graph of an optimal map are calibrated by the form
$\Phi$, hence maximal with respect to the pseudo-metric $h^{\rho,\bar{\rho}}$
in metric.

{Let $c:M\times\bar{M}\rightarrow\mathbf{R}$ be a positive, continuous,
superdifferentiable cost function which is smooth on a set }${N=}${$\left(
M\times\bar{M}\right)  $-$\mathfrak{C}$}, \ where{ $\mathfrak{C}$ is a closed
measure zero set which we will call the \textquotedblleft cut locus". (The
reason for the terminology \textquotedblleft cut-locus" is clear if the cost
function is the distance squared function on a Riemannian manifold.) For some
regularity issues later, we further assume that the cost function is uniformly
semi-concave on compact sets, i.e. for any compact coordinate charts
$K\times\bar{K}\subset M\times\bar{M}$, there exists a smooth function
$f:{K\times\bar{K}}\rightarrow\mathbf{R}$ such that $f+c$ is concave on
$K\times\bar{K}.$} \ \ Let $D\bar{D}c$ be the $n\times n$ matrix\ given by
\[
\left(  D\bar{D}c\right)  _{i\bar{j}}(x,\bar{x})=\frac{\partial^{2}}{\partial
x^{i}\partial\bar{x}^{j}}c(x,\bar{x}).
\]
On $N$ we will require that
\begin{equation}
\det\left(  D\bar{D}c\right)  \neq0 \tag{A2}%
\end{equation}
and that $\bar{c}(\bar{x},x):=c(x,\bar{x})$ \ both satisfy the \emph{twist
}(A1) condition: \ For each $x,$ \
\begin{equation}
\bar{x}\rightarrow Dc(x,\bar{x}) \tag{A1}%
\end{equation}
is invertible, with an inverse depending continuously on $x$, and vice versa. \ 

\ Now let $h_{c}$ be the pseudo-metric on $N\subset M\times\bar{M}$ that is
defined at each $T_{(x,\bar{x})}N=T_{x}M\times T_{\bar{x}}\bar{M}$ as
\[
h_{c}(\partial x^{i},\partial\bar{x}^{j})=h_{c}(\partial\bar{x}^{j},\partial
x^{i})=\frac{\partial^{2}}{\partial x^{i}\partial\bar{x}^{j}}c(x,\bar{x})
\]
which is represented in coordinates\ by the nondegenerate symmetric matrix
\begin{equation}
h_{c}=\left(
\begin{array}
[c]{cc}%
0 & -D\bar{D}c\\
-\left(  D\bar{D}c\right)  ^{T} & 0
\end{array}
\right)  \label{eq:h}%
\end{equation}
(This metric was introduced by the first and second author in \cite{KM}.)
{Note that as a matrix, $h_{c}$ has $n$ positive and $n$ negative eigenvalues;
in fact, we can choose coordinates so that }$-D\bar{D}c$ is positive definite.

Given assumption (A1) together with some decay conditions on $\rho,\bar{\rho}$
if $M$, $\bar{M}$ are noncompact, we will always have a unique solution (see,
for example, \cite[Theorem 10.28]{Villani}) to the optimal transport problem,
namely, a map $F:M\rightarrow\bar{M}$ which satisfies $F_{\#}\rho=\bar{\rho}$
in the sense of measures. \ One also has the minimizing measure $\gamma$ (c.f
\cite[Theorem 5.10]{Villani}) on $M\times\bar{M}$\ which has marginals
$\rho,\bar{\rho},$ which is supported on the set $\Gamma$ which is the graph
of the (possibly multivalued) optimal map. \ The measure $\gamma$ solves the
Kantorovich problem, producing two potential functions $u$ $\in C(M)$ and $v$
$\in C(\bar{M})$. From these potentials we may derive the optimal mapping
$F_{u}:$\ $M\rightarrow\bar{M}$ and an inverse mapping $F_{v}:\bar{M}$
$\rightarrow M.$ \ Note that $F_{v}$ solves the symmetric optimal
transportation problem. \ { We have the following now well-known fact: }

\begin{proposition}
(c.f. \cite{GM}) Both $F_{u}$ and $F_{v}$ differentiable almost everywhere.
\end{proposition}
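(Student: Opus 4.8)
The plan is to recover $F_{u}$ from the first Kantorovich potential $u$ by means of the twist condition (A1), transferring the almost-everywhere twice differentiability of $u$ to the differentiability of $F_{u}$; the statement for $F_{v}$ then follows by symmetry, applying the argument to $\bar{c}(\bar{x},x):=c(x,\bar{x})$ and the potential $v$. First I would record the standard consequences of duality: the optimal $\gamma$ concentrates on $\Gamma=\{(x,\bar{x})\ :\ u(x)+v(\bar{x})=c(x,\bar{x})\}$ while $u(x')+v(\bar{x}')\le c(x',\bar{x}')$ everywhere, so $u(x)=\min_{\bar{x}}\big(c(x,\bar{x})-v(\bar{x})\big)$ exhibits $u$ as an infimum of the functions $x\mapsto c(x,\bar{x})-v(\bar{x})$. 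Since $c$ is uniformly semi-concave on compact sets, these functions are locally semi-concave with a common modulus, hence so is $u$; by Alexandrov's theorem $u$ is therefore twice differentiable at Lebesgue-almost every $x_{0}$, with a symmetric second differential $A=D^{2}u(x_{0})$, and from the second-order Taylor expansion one obtains the pointwise first-order expansion of the super-differential, $|p-Du(x_{0})-A(x-x_{0})|=o(|x-x_{0}|)$ as $x\to x_{0}$, for every $p\in\partial u(x)$. On the full-measure set $D$ of differentiability points of $u$, the map $F_{u}$ is single-valued and satisfies $D_{x}c(x,F_{u}(x))=Du(x)$: if $(x_{0},\bar{x}_{0})\in\Gamma$ with $x_{0}\in D$, then $x\mapsto c(x,\bar{x}_{0})-v(\bar{x}_{0})-u(x)\ge0$ vanishes at $x_{0}$, forcing $D_{x}c(x_{0},\bar{x}_{0})=Du(x_{0})$, and (A1) makes $\bar{x}_{0}$ the unique such point, i.e.\ $F_{u}(x_{0})=\bar{x}_{0}$.

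Next I would invert the twist map. By (A2), $\det(D\bar{D}c)\ne0$ on $N$, so near $(x_{0},\bar{x}_{0})$ (off the cut locus) the implicit function theorem applied to the smooth function $c$ gives a smooth map $\bar{x}=H(x,p)$ solving $D_{x}c(x,\bar{x})=p$, with $H(x_{0},Du(x_{0}))=\bar{x}_{0}$; by the twist injectivity this is the unique solution, so $F_{u}(x)=H(x,Du(x))$ for $x\in D$ near $x_{0}$. Substituting the first-order expansion of $Du$ into the smooth map $H$ yields $F_{u}(x)=\bar{x}_{0}+L(x-x_{0})+o(|x-x_{0}|)$ as $x\to x_{0}$ through $D$, with $L=D_{x}H+D_{p}H\circ A=-(D\bar{D}c)^{-1}\big(D_{xx}^{2}c(x_{0},\bar{x}_{0})+A\big)$. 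Hence $F_{u}$ is differentiable at $x_{0}$ --- equivalently, approximately differentiable there, the natural notion since $F_{u}$ is a priori merely measurable --- and since almost every $x_{0}$ is of this type, $F_{u}$ is differentiable almost everywhere; the same argument applied to $\bar{c}$ and $v$ gives it for $F_{v}$.

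The main obstacle is that $F_{u}$ need not be continuous, so one cannot feed its graph directly into the implicit function theorem: differentiability must be read at Alexandrov points, and the composition argument succeeds precisely because (i) $H$ is genuinely smooth, a consequence of (A2), and (ii) the gradient of a semi-concave function admits a pointwise first-order expansion at each point where the function is twice differentiable. A secondary point, suppressed above, is that the relation $D_{x}c(x_{0},\bar{x}_{0})=Du(x_{0})$ and the smoothness of $c$ are only directly available off the measure-zero cut locus $\mathfrak{C}$; this is handled using the assumed superdifferentiability of $c$, which supplies smooth upper support functions everywhere, together with $|\mathfrak{C}|=0$ so that $\rho$-a.e.\ $x_{0}$ is reached.
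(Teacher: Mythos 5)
Your argument is correct and is exactly the route the paper sketches: semi-concavity of $c$ gives local semi-concavity of the potential $u$, Alexandrov's theorem gives a.e.\ twice differentiability of $u$, and then (A1)+(A2) (via the smooth implicit solution $\bar{x}=H(x,p)$ of $D_x c = p$) let you read off differentiability of $F_u = H(\cdot, Du(\cdot))$ at Alexandrov points, with the symmetric statement for $F_v$. You have merely expanded the paper's sketch with the standard details (the envelope representation of $u$, the first-order expansion of $\partial u$, and the observation that the measure-zero cut locus does not obstruct the argument).
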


\noindent Here is a sketch of the proof: The cost function $c$ is
semi-concave, which gives semi-convexity of the $c$-convex solution $u.$
(Using the conventions in \cite[Theorem 5.2]{Villani}.)\ Then by Alexandrov's
theorem, $u$ is twice differentiable almost everywhere, and together with
assumptions (A1) (A2) we see that the transport maps must be differentiable
where $u$ is so.

\subsection{Choosing orientation and coordinates}

\label{S:orientation}

We are assuming that $M$ comes with an orientation, which we respect. \ At any
point $(x,\bar{x})\in$ $N,$ $D\bar{D}c$ is nondegenerate, so by a change of
coordinates (only on $\bar{M}$) we can arrange that $-D\bar{D}c$ is positive
definite. \ \ This locally fixes an orientation on $\bar{M}$, which applies
locally in $N.$ \ In particular, it is possible that the orientations on the
same neighborhoods on $\bar{M}$ may differ, depending on where they lie with
respect to the product manifold. \ Fortunately, it is easy to check that on
connected components of $N,$ the (A2)~condition implies that our choice of
coordinates on $\bar{M}$, and hence orientation, is consistent. \ 

\begin{claim}
Choosing coordinates on $\bar{M}$ so that $-D\bar{D}c+\left(  -D\bar
{D}c\right)  ^{T}$ is positive definite, the locally defined forms
\[
\tau=\frac{dx+d\bar{x}}{2}%
\]
are consistent space orientation forms on $N.$
\end{claim}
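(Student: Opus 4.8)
The plan is to reduce the statement to a fact about block matrices and then to recognise that, on spacelike planes, $\tau$ reproduces an orientation that is visibly coordinate-free. Work in the chosen coordinates on $N=(M\times\bar M)-\mathfrak{C}$ and set $A:=-D\bar D c$, so that by hypothesis $A+A^{T}$ is positive definite, and in the block form \eqref{eq:h}
\[
h_{c}\big((v,\bar v),(w,\bar w)\big)=v^{T}A\bar w+w^{T}A\bar v,\qquad h_{c}\big((v,\bar v),(v,\bar v)\big)=2\,v^{T}A\bar v .
\]
First I would note that if an $n$-plane $P\subset T_{(x,\bar x)}N$ is spacelike then the projection $\pi|_{P}\colon P\to T_{x}M$ is a linear isomorphism: a nonzero vector $(0,\bar v)\in P$ would satisfy $h_{c}((0,\bar v),(0,\bar v))=0$, contradicting $h_{c}|_{P}>0$. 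Hence $P=\{(v,Tv):v\in T_{x}M\}$ for a unique matrix $T$, and $h_{c}|_{P}>0$ is equivalent to $v^{T}ATv>0$ for all $v\neq 0$, i.e.\ to the symmetric part of $AT$ being positive definite.

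The heart of the argument is the linear-algebra claim: \emph{if $A+A^{T}$ and $AT+(AT)^{T}$ are both positive definite, then $\det T>0$.} I would prove this by convexity. The set $\mathcal{B}=\{B:B+B^{T}>0\}$ is convex, hence connected, and consists of invertible matrices (if $Bv=0$ then $v^{T}(B+B^{T})v=2v^{T}Bv=0$, so $v=0$); therefore $\det$ is continuous and nowhere vanishing on $\mathcal{B}$. Since $A\in\mathcal{B}$ and $AT\in\mathcal{B}$, the values $\det A$ and $\det(AT)$ have the same sign, and so $\det T=\det(AT)/\det A>0$ (joining $A$ to the identity by $t\mapsto tA+(1-t)I$ inside $\mathcal{B}$ one also sees $\det A>0$, though only the equality of signs is needed). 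This is the step I expect to be the main obstacle; the point to watch is that ``spacelike'' must be the positive-\emph{definite} condition $h_{c}|_{P}>0$, which is exactly what turns it into positive-definiteness of $\operatorname{sym}(AT)$ and brings the convexity of $\mathcal{B}$ into play. The same convexity observation, applied to $P^{T}AQ$ with $\det P>0$, also shows that any two admissible coordinate choices on $\bar M$ induce the same orientation, so the $\bar M$-orientation used to write $\tau$ is itself well defined on connected components.

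Finally I would evaluate $\tau=\tfrac{1}{2}(dx+d\bar x)$ on a simple $n$-vector $\xi=w_{1}\wedge\cdots\wedge w_{n}$ representing a spacelike plane $P$. Writing $v_{i}=\pi(w_{i})$, which is a basis of $T_{x}M$, we have $w_{i}=(v_{i},Tv_{i})$, and $dx(\xi)=\det(v_{1},\ldots,v_{n})$ while $d\bar x(\xi)=\det(Tv_{1},\ldots,Tv_{n})=(\det T)\,dx(\xi)$, so
\[
\tau(\xi)=\tfrac{1}{2}\big(1+\det T\big)\,dx(\xi).
\]
Because $\det T>0$ and $dx(\xi)\neq 0$, this is nonzero, so each locally defined $\tau$ is a genuine space orientation form; moreover its sign equals that of $dx(\xi)$, which is exactly the sign with which the orientation carried by $\xi$ pushes forward under $\pi$ to the fixed orientation of $M$. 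That last description refers neither to oriented coordinates on $M$ nor to any coordinates on $\bar M$, so on each connected component of $N$ all the locally defined forms $\tfrac{1}{2}(dx+d\bar x)$ single out the same spacelike $n$-planes as $\tau$-oriented — namely those projecting onto the positive orientation of $M$ — and hence are mutually consistent, which is what was to be shown.
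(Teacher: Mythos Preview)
Your argument is correct and in fact addresses more than the paper's own proof does. The paper proceeds constructively: starting from an arbitrary product chart $(x,\bar x)$, it builds a new $\bar M$-chart $\bar y$ by prescribing the Jacobian of the transition $\varphi$ at a point to equal $(-D\bar Dc)^{-1}$ there, so that in the $\bar y$-chart $-D\bar Dc$ becomes the identity at that point and its symmetrized version stays positive definite nearby; this is repeated over a locally finite cover, and consistency on overlaps is asserted from the observation that all such transition Jacobians are near the inverse of one smoothly varying matrix. The paper does not separately check that $\tau$ is nonzero on every spacelike $n$-plane.

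Your route is genuinely different. You isolate the linear-algebra fact that the convex set $\mathcal B=\{B:B+B^{T}>0\}$ contains only matrices of positive determinant, and apply it twice: once to show that every spacelike $n$-plane is a graph $\{(v,Tv)\}$ with $\det T>0$, giving $\tau(\xi)=\tfrac12(1+\det T)\,dx(\xi)\neq 0$, and once to see that any two admissible $\bar M$-charts differ by a positive-determinant transition. This yields at one stroke both the space-orientation property and an intrinsic description of the $\tau$-sign on spacelike planes (it agrees with the $M$-orientation under projection), from which consistency across charts is immediate. Your approach is more self-contained and explains \emph{why} $\tau$ deserves the name ``space orientation form''; the paper's is terser but leaves that point implicit.
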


\begin{proof}
Let $(x,\bar{x})$ $\subset\Omega\times\bar{\Omega}$ be any product coordinate
chart, with $x$ consistent with the orientation on $M.$ \ Let $\bar{x}%
=\varphi(\bar{y})$ for some diffeomorphism $\varphi:\bar{B}\rightarrow
\bar{\Omega}\subset\bar{M}.$ \ Then
\[
\partial_{y_{j}}\partial_{x_{i}}c(x,\varphi(\bar{y}))=\partial_{\bar{x}_{k}%
}\partial_{x_{i}}c(x,\bar{x})\partial_{y_{j}}\varphi^{\bar{k}}.
\]
At a point, we are free to choose $\partial_{y_{j}}\varphi^{\bar{k}}$ to be
anything (nondegenerate)\ that we want, so we choose it to be the inverse of
$-\partial_{\bar{x}_{k}}\partial_{x_{i}}c(x,\bar{x}).$ \ By smoothness of $c$
on $N,$ $-\partial_{y_{j}}\partial_{x_{i}}c(x,\varphi(\bar{y}))$ will remain
(after symmetrizing) positive definite on a nieghborhood on $N,$ and hence on
some subset of $\bar{A}\subset\bar{B}.$ \ We declare the coordinates on
$\bar{A}$ to be oriented, and $\Omega\times\bar{A}$ defines an orientation on
$N.$\ One can repeat this construction at every point on $N$ and take a
locally finite cover of such neighborhoods. \ These are consistent on any
overlap, as both charts are defined to have differential close to the inverse
of a smoothly changing matrix.
\end{proof}

At this point, we fix a space orientation form $\tau$ by summing over a
partition of unity. \ \ We also note that the coordinate definition
\ref{def:cal} of $\Phi$ is now justified. \ \ For future reference, we note
that
\begin{equation}
\det(-\bar{D}Dc)>0.\label{orientation determinant}%
\end{equation}

\subsection{\bigskip What happens where the transport map is smooth}

Where differentiable, the potential $u$ satisfies%
\[
Du(x)=-Dc(x,F_{u}(x))
\]
(we are using conventions as in \cite[Theorem 5.2]{Villani})\ and
\[
D^{2}u(x)=-D^{2}c(x,F_{u}(x))-D\bar{D}c(x,F_{u}(x))DF_{u}(x)
\]
in particular
\begin{equation}
-D\bar{D}c(x,F_{u}(x))DF_{u}(x)=D^{2}u(x)+D^{2}c(x,F_{u}(x))
\label{eq:graph is Lagrangian}%
\end{equation}
the right-hand side of which is clearly symmetric. \ Further, $c$-convexity
implies that
\begin{equation}
D^{2}u(x)\geq-D^{2}c(x,F_{u}(x)). \label{eq:graph is spacelike}%
\end{equation}
To satisfy $F_{\#}\rho=\bar{\rho},$ the map $F_{u}$ must also satisfy, where
differentiable
\begin{equation}
\bar{\rho}(F_{u}(x))\det DF_{u}(x)=\rho(x) \label{eq:measure preserving}%
\end{equation}
recalling, (\ref{orientation determinant}). \ Now consider the following
symplectic form on $M\times\bar{M}$ :
\[
\omega_{c}=\left(
\begin{array}
[c]{cc}%
0 & -D\bar{D}c\\
\left(  D\bar{D}c\right)  ^{T} & 0
\end{array}
\right)  ,
\]
and the following conformal perturbation of $h_{c}$:
\begin{equation}
h^{\rho,\bar{\rho}}=\frac{1}{2}\left(  \frac{\rho(x)\bar{\rho}(\bar{x}%
)}{\left\vert \det(-D\bar{D}c)\right\vert }\right)  ^{\frac{1}{n}}h_{c}.
\label{eq:conformal h}%
\end{equation}
First we show that the graph of $F_{u}$ is Lagrangian, wherever it is
differentiable, by the following. Pull-back the form $\omega_{c}$ to $M,$ and
evaluate on any two tangent vectors
\begin{align*}
(Id\times F)^{\ast}\omega_{c}(\partial_{i},\partial_{j})  &  =\langle-D\bar
{D}c\left(  \partial_{i}F_{u}\right)  ,\partial_{j}\rangle+\langle\left(
D\bar{D}c\right)  ^{T}\partial_{i},\left(  \partial_{j}F_{u}\right)  \rangle\\
&  =\langle-D\bar{D}c\left(  \partial_{i}F_{u}\right)  ,\partial_{j}%
\rangle+\langle\partial_{i},D\bar{D}c\left(  \partial_{j}F_{u}\right)
\rangle\\
&  =\left(  D\bar{D}cDF\right)  _{ij}-\left(  D\bar{D}cDF\right)  _{ji}%
\end{align*}
which vanishes by~(\ref{eq:graph is Lagrangian}).

Similarly, pulling back the metric $h_{c}:$%
\begin{align*}
(Id\times F)^{\ast}h_{c}(\partial_{i},\partial_{j})  &  =\langle-D\bar
{D}c\left(  \partial_{i}F_{u}\right)  ,\partial_{j}\rangle+\langle\left(
-D\bar{D}c\right)  ^{T}\partial_{i},\left(  \partial_{j}F_{u}\right)
\rangle\\
&  =-\left(  D\bar{D}cDF\right)  _{ij}-\left(  D\bar{D}cDF\right)  _{ji}%
\end{align*}
which is nonnegative\ by (\ref{eq:graph is spacelike}). The measure preserving
condition (\ref{eq:measure preserving}), along with
~(\ref{eq:graph is spacelike}) and (\ref{eq:graph is Lagrangian}) guarantee
that this metric will be strictly positive wherever the map is differentiable.

\begin{proposition}
\label{prop:calibrated}

Let $F:M\rightarrow\bar{M}$. Assume that $F$ is $c$-monotone, i.e.
\begin{equation}
c(x,F(x))+c(y,F(y))-c(x,F(y))-c(y,F(x))\leq0\ \ \text{ for all $x,y\in M$.}
\label{2monotone}%
\end{equation}
\ At any point of differentiability of $F,$ the following holds%
\[
(Id\times F)^{\ast}\Phi(\partial_{1},...,\partial_{n})(x)\geq\sqrt{\det
g_{ij}(x)}%
\]
where
\[
g_{ij}=(Id\times F)^{\ast}h_{c}^{\rho,\bar{\rho}}(\partial_{i},\partial_{j})
\]
is the induced metric, with equality if and only if \ both
(\ref{eq:measure preserving}) and (\ref{eq:graph is Lagrangian}) hold, in
which case%
\[
(Id\times F)^{\ast}\Phi(\partial_{1},...,\partial_{n})(x)=\sqrt{\det
g_{ij}(x)}=\rho(x).
\]

\end{proposition}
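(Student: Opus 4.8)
The plan is to reduce everything to a pointwise linear-algebra inequality at a point of differentiability of $F$, using the computations already assembled in the text. Fix such a point $x$, write $A = -D\bar D c(x,F(x))$ (which is positive definite after our choice of coordinates, by \eqref{orientation determinant}) and $B = AD F_u(x)$, which by \eqref{eq:graph is Lagrangian} is symmetric, and by \eqref{eq:graph is spacelike} satisfies $B \geq 0$ as a quadratic form. With $\lambda = \tfrac12\bigl(\rho(x)\bar\rho(F(x))/|\det A|\bigr)^{1/n}$ the conformal factor from \eqref{eq:conformal h}, the pulled-back metric computed in the text is $g_{ij} = 2\lambda\, B_{ij}$ (from $(Id\times F)^*h_c(\partial_i,\partial_j) = 2 B_{ij}$ up to symmetrization), so $\sqrt{\det g} = (2\lambda)^{n/2}\sqrt{\det B}$. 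On the other hand $(Id\times F)^*\Phi(\partial_1,\dots,\partial_n) = \tfrac12\bigl(\rho(x) + \bar\rho(F(x))\det DF_u(x)\bigr) = \tfrac12\bigl(\rho(x) + \bar\rho(F(x))\det B/\det A\bigr)$, using $\det DF_u = \det B/\det A$ and the orientation sign \eqref{orientation determinant} to drop absolute values.

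Next I would reduce the claimed inequality $(Id\times F)^*\Phi(\partial_1,\dots,\partial_n) \geq \sqrt{\det g}$ to the arithmetic–geometric mean inequality. Writing $\alpha = \rho(x)$ and $\beta = \bar\rho(F(x))\det B/\det A$ — both nonnegative, since $B\geq 0$ and $\det A>0$ — the left side is $\tfrac12(\alpha+\beta)$ while, after substituting $\lambda$ and simplifying, the right side works out to exactly $\sqrt{\alpha\beta}$. Indeed $(2\lambda)^{n/2}\sqrt{\det B} = \bigl(\rho(x)\bar\rho(F(x))/\det A\bigr)^{1/2}\sqrt{\det B} = \sqrt{\rho(x)\cdot \bar\rho(F(x))\det B/\det A} = \sqrt{\alpha\beta}$. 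So the inequality is precisely $\tfrac12(\alpha+\beta)\geq\sqrt{\alpha\beta}$, which holds for all $\alpha,\beta\geq 0$, with equality exactly when $\alpha=\beta$. This AM–GM step is the conceptual heart, and everything else is bookkeeping of the conformal factor, determinants and orientation signs.

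Finally I would identify the equality case. Equality in AM–GM means $\alpha = \beta$, i.e. $\rho(x) = \bar\rho(F(x))\det DF_u(x)$, which is exactly the measure-preserving condition \eqref{eq:measure preserving}; and the symmetry needed for $B$ to make sense as a symmetric positive quadratic form is the Lagrangian condition \eqref{eq:graph is Lagrangian}, which is where $c$-monotonicity \eqref{2monotone} enters — it is the infinitesimal form of monotonicity that forces the right-hand side of \eqref{eq:graph is Lagrangian} to be symmetric and \eqref{eq:graph is spacelike} to hold, hence $B\geq 0$. (Note $c$-monotonicity of $F$ is the pointwise shadow of $c$-cyclical monotonicity of the optimal plan; for the proposition as stated one only needs the two-point inequality \eqref{2monotone}, from which \eqref{eq:graph is Lagrangian} and \eqref{eq:graph is spacelike} follow at points of differentiability by a standard second-order Taylor argument.) When equality holds, $\alpha=\beta$ gives $(Id\times F)^*\Phi(\partial_1,\dots,\partial_n) = \tfrac12(\alpha+\beta) = \alpha = \rho(x)$, which is the last displayed claim.

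The main obstacle I anticipate is not the inequality itself but handling the orientation and absolute-value conventions cleanly: one must be careful that the choice of coordinates making $-D\bar D c$ positive definite is compatible with the orientation on $\bar M$ used to define $\Phi$, so that $\det DF_u$ appears without an absolute value and with the correct sign in $(Id\times F)^*\Phi$. This compatibility is exactly what the Claim on consistency of the orientation forms $\tau = \tfrac12(dx+d\bar x)$ and \eqref{orientation determinant} were set up to provide, so I would cite those rather than re-derive them, and otherwise the proof is a short, self-contained computation.
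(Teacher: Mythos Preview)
There is a genuine gap. You assume from the outset that $B=-D\bar D c(x,F(x))\,DF(x)$ is symmetric, citing \eqref{eq:graph is Lagrangian}, and later assert that this symmetry ``follows at points of differentiability by a standard second-order Taylor argument'' from the two-point $c$-monotonicity \eqref{2monotone}. This is false. Two-point $c$-monotonicity forces only that the \emph{symmetric part} of $B$ is positive semidefinite (which is exactly the content of Lemma~\ref{prop:nonnegative mass}); it does not force $B$ itself to be symmetric. For the bilinear cost $c(x,\bar x)=-x\cdot\bar x$ on $\mathbf{R}^n\times\mathbf{R}^n$, $c$-monotonicity of $F$ is ordinary monotonicity $(x-y)\cdot(F(x)-F(y))\ge 0$, and $F(x)=Ax$ with $A$ having positive-definite symmetric part but $A\neq A^T$ is $c$-monotone yet $B=A$ is not symmetric. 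In the paper, \eqref{eq:graph is Lagrangian} and \eqref{eq:graph is spacelike} are derived for the optimal map $F_u$ coming from a potential $u$, not from $c$-monotonicity alone; the proposition, however, is stated for an arbitrary $c$-monotone $F$.

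Because of this, your computation $\sqrt{\det g}=(2\lambda)^{n/2}\sqrt{\det B}$ is incorrect in general: the pulled-back metric is $g_{ij}=\lambda(B_{ij}+B_{ji})$, so $\sqrt{\det g}=(2\lambda)^{n/2}\sqrt{\det\bigl(\tfrac12(B+B^T)\bigr)}$. The missing step is precisely the symmetrization inequality
\[
\det\Bigl(\tfrac12(B+B^T)\Bigr)\le\det B\qquad\text{whenever }\langle Bv,v\rangle\ge 0\ \forall v,
\]
with equality iff $B=B^T$ (see \cite[Lemma~3.1]{Warren} or \cite[p.~803]{Me}). This inequality, followed by the AM--GM step you do carry out, is exactly how the paper proceeds, and the two equality cases line up with \eqref{eq:graph is Lagrangian} and \eqref{eq:measure preserving} respectively. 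Your argument as written collapses the first inequality by assuming its equality case as a hypothesis, so it proves the proposition only for maps that are already Lagrangian and fails to recover the full ``if and only if'' characterization.
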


\begin{proof}
\bigskip First we compute the calibration, from (\ref{def:cal}) \ \
\[
(Id\times F)^{\ast}\Phi(\partial_{1},...,\partial_{n})=\frac{\rho+\det
DF\bar{\rho}(F(x))}{2}.
\]
Next, using (\ref{eq:conformal h})
\begin{align*}
g_{ij}  &  =\frac{1}{2}\left(  \frac{\rho(x)\bar{\rho}(F(x))}{\left\vert
\det(D\bar{D}c)\right\vert }\right)  ^{\frac{1}{n}}\left(
\begin{array}
[c]{cc}%
I & \left(  DF\right)  ^{T}%
\end{array}
\right)  \left(
\begin{array}
[c]{cc}%
0 & -D\bar{D}c\\
-\left(  D\bar{D}c\right)  ^{T} & 0
\end{array}
\right)  \left(
\begin{array}
[c]{c}%
I\\
DF
\end{array}
\right) \\
&  =\left(  \frac{\rho(x)\bar{\rho}(F(x))}{\left\vert \det(D\bar
{D}c)\right\vert }\right)  ^{\frac{1}{n}}\left(  \frac{\left(  DF\right)
^{T}\left(  -D\bar{D}c\right)  ^{T}+\left(  -D\bar{D}c\right)  \left(
DF\right)  }{2}\right)  .
\end{align*}
Now taking the determinant
\[
\det g_{ij}\leq\left(  \frac{\rho(x)\bar{\rho}(F(x))}{\left\vert \det(D\bar
{D}c)\right\vert }\right)  \det\left(  \left(  DF\right)  ^{T}\left(
-D\bar{D}c\right)  ^{T}\right)
\]
with equality if and only if $\left(  DF\right)  ^{T}\left(  -D\bar
{D}c\right)  ^{T}$ is symmetric, recalling the fact\ (see \cite{Warren}, Lemma
3.1, also for a different formulation \cite[pg 803]{Me} ) that for each
$n\times n$ matrix $B$ with property $\langle Bv,v\rangle\geq0$ for every
$v$,
\[
\det(\frac{1}{2}(B+B^{T}))\leq\det B
\]
and the equality holds if and only if $B=B^{T}$. \ \ We show in Lemma
\ref{prop:nonnegative mass} that due to $c$-monotonicity of $F$, the matrix
$B=\left(  DF\right)  ^{T}\left(  -D\bar{D}c\right)  ^{T}$ satisfies $\langle
Bv,v\rangle\geq0$ for every $v$. \ 

In particular, applying Cauchy-Schwarz \
\[
\sqrt{\det g_{ij}(x)}\leq\sqrt{\rho(x)\bar{\rho}(F(x))\det\left(  DF\right)
}\leq\frac{\rho(x)+\bar{\rho}(F(x))\det\left(  DF\right)  }{2}%
\]
with equality if and only if (\ref{eq:graph is Lagrangian}) and
(\ref{eq:measure preserving}) hold, respectively. \ 
\end{proof}

\begin{corollary}
\label{cor:cal}The form $\Phi$ is a calibration. \ 
\end{corollary}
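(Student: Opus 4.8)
The plan is to verify the two requirements of Definition~\ref{def:calibration} for the $n$-form $\Phi$ on $N$: that $d\Phi=0$, and that $\Phi_{(x,\bar x)}(\xi)\ge\|\xi\|_{h^{\rho,\bar\rho}}$ for every spacelike $\tau$-oriented simple $n$-vector $\xi$ at every point of $N$ --- equivalently, by the remark in Definition~\ref{def:calibration}, that $\|\Phi\|^{\ast}_{h}\ge 1$. Closedness is immediate: writing $\Phi=\tfrac12(\pi^{\ast}\rho+\bar\pi^{\ast}\bar\rho)$ with $\pi,\bar\pi$ the two projections of $M\times\bar M$, the volume forms $\rho,\bar\rho$ are top-degree, hence closed, and pullbacks of closed forms are closed, so $d\Phi=\tfrac12(\pi^{\ast}d\rho+\bar\pi^{\ast}d\bar\rho)=0$.

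For the pointwise bound I would first reduce to the graph case. A spacelike plane $P\subset T_{(x,\bar x)}N=T_xM\times T_{\bar x}\bar M$ can contain no nonzero vertical vector $(0,b)$, since such a vector is $h_c$-null by the block-antidiagonal shape~\eqref{eq:h}; as $n$ is the maximal dimension of a spacelike plane, the projection $P\to T_xM$ is therefore an isomorphism, and $P$ is the graph of a unique linear map $A\colon T_xM\to T_{\bar x}\bar M$. In the oriented coordinates of~\S\ref{S:orientation} every simple $n$-vector representing $P$ equals $\mu\,\eta$ with $\eta=(e_1,Ae_1)\wedge\cdots\wedge(e_n,Ae_n)$ and $\mu\neq0$; moreover $\mu>0$ whenever the $n$-vector is $\tau$-oriented, because $\tau(\eta)=\tfrac12(1+\det A)>0$ once we know $\det A>0$, which will follow from the spacelike condition together with~\eqref{orientation determinant} as in the next step. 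Since $\Phi_{(x,\bar x)}(\cdot)$ and $\|\cdot\|_{h^{\rho,\bar\rho}}$ are positively $1$-homogeneous on simple $n$-vectors, it is then enough to prove $\Phi_{(x,\bar x)}(\eta)\ge\|\eta\|_{h^{\rho,\bar\rho}}$.

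For this I would rerun the computation in the proof of Proposition~\ref{prop:calibrated} with the linear map $A$ in place of $DF$: it produces $\Phi_{(x,\bar x)}(\eta)\ge\sqrt{\det g_{ij}}=\|\eta\|_{h^{\rho,\bar\rho}}$, where $g_{ij}=h^{\rho,\bar\rho}((e_i,Ae_i),(e_j,Ae_j))$ is the induced Gram matrix, as soon as the matrix $B:=A^{T}(-D\bar Dc)^{T}$ satisfies $\langle Bv,v\rangle\ge0$ for all $v$. In Proposition~\ref{prop:calibrated} this positivity was extracted from $c$-monotonicity via Lemma~\ref{prop:nonnegative mass}; here it is instead automatic --- and strict --- because the induced metric on the spacelike plane $P$ is precisely $B+B^{T}$, which is positive definite exactly because $P$ is spacelike, whence $B$ is positive definite (so in particular $\det B>0$, and $\det A>0$ by~\eqref{orientation determinant}). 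The matrix inequality $\det\bigl(\tfrac12(B+B^{T})\bigr)\le\det B$, Cauchy--Schwarz, and~\eqref{orientation determinant} then give the required inequality, so together with $d\Phi=0$ this shows $\Phi$ is a calibration on $(N,h^{\rho,\bar\rho},\tau)$.

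The only point needing care is the reduction to graphs: once an arbitrary spacelike $\tau$-oriented $n$-vector is written as a positive multiple of a graph $n$-vector, the spacelike condition delivers exactly the positivity of $B$ that the proof of Proposition~\ref{prop:calibrated} uses, so that argument applies with no reference to $c$-monotonicity and the bound holds for all admissible $\xi$, not merely those tangent to the graph of an optimal map. Everything else is linear algebra already in hand.
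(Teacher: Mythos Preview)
Your argument is correct and follows essentially the same route as the paper's: closedness is immediate, and the comass bound is obtained by observing that every spacelike $\tau$-oriented $n$-plane is a graph and then feeding the corresponding linear map through the computation of Proposition~\ref{prop:calibrated}. The paper compresses this into the single remark that ``every spacelike plane can be obtained as the tangent space to the graph of a monotone function''; you unpack this by noting directly that the spacelike condition forces $B+B^{T}>0$, which is exactly the positivity input that Proposition~\ref{prop:calibrated} needs --- so your version is a slightly more self-contained rendering of the same idea, avoiding the detour through an actual monotone map.
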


\begin{proof}
The above calculation shows that this form has comass $1,$ noting that every
spacelike plane can be obtained as the tangent space to the graph of a
monotone function. \ Closedness of the form is clear. \ 
\end{proof}

\subsection{Solutions defining a current}

For a given optimal transport map $F$ (possible multivalued) we deal with the
closure of the graph of $F,$ that is
\[
\Gamma=\text{cl}\left(  Id\times F\right)  (M)\subset M\times\bar{M}.
\]
{ It is well known that the closure $\Gamma$ of any such set satisfies
$c$-cyclical monotonicity: if $(x_{i},\bar{x}_{i})\in{\Gamma},i=1,\cdots,l$
then
\[
\sum_{i}c(x_{i},\bar{x}_{i})\leq\sum_{i}c\left(  x_{i},\bar{y}_{\sigma
(i)}\right)  \ \ \text{for any permutation $\sigma$ of $\{1,\cdots,l\}$.}%
\]
In particular, $F$ satisfies $c$-monotonicity.} \ In our case we must deal
with {$\Gamma$} avoiding the cut-locus:\ Our arguments require that the
current we are defining is compactly supported inside the region where the
metric is smooth. If the metric is not smooth it may be difficult to define
the comass\ of an $n$-form. Our main goal of this subsection is the following,
which is similar in nature to the result in \cite{MPW}.

\begin{proposition}
Suppose that $c$ is smooth, and satisfies (A1) (A2) on a neighborhood of
{$\Gamma$}. Then {$\Gamma$} defines a current in a natural way. \ 
\end{proposition}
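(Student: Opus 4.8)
The plan is to realise $\Gamma$ as the integration current $T_\Gamma(\psi)=\int_\Gamma\psi$, $\psi\in\Omega^nN$, for which it suffices to verify three things: that $\Gamma$ is countably $n$-rectifiable with locally finite $\mathcal{H}^n$-measure, that its approximate tangent planes admit a consistent orientation, and that $\Gamma$ is compactly contained in the smooth locus $N$. The third point is immediate: the hypothesis that $c$ is smooth and satisfies (A1)--(A2) on a neighbourhood of $\Gamma$ means $\Gamma\ssubset N$, well away from the cut locus $\mathfrak{C}$, so the comass of a smooth $n$-form is defined near $\Gamma$ and the pairing $\psi\mapsto\int_\Gamma\psi$ is continuous on $\Omega^nN$ with support in $N$. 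Since $\Gamma$ is the closure of a graph over $M$ --- or, when $M,\bar M$ are noncompact, after passing to a compactly supported portion of the graph as discussed above --- we may assume $\Gamma$ compact, so the remaining two points reduce to local statements in a finite cover of $\Gamma$ by coordinate charts inside $N$.

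In each such chart, centred at a point $(x_0,\bar x_0)\in\Gamma$, I would apply the \emph{Lewy transformation}
\[
\Psi:(x,\bar x)\longmapsto\bigl(x,\,-D_xc(x,\bar x)\bigr).
\]
Its Jacobian in the $\bar x$-variables is $-D\bar Dc$, nondegenerate by (A2), and it is injective in $\bar x$ by the twist condition (A1); hence $\Psi$ is a $C^\infty$ diffeomorphism of a neighbourhood of $(x_0,\bar x_0)$ onto its image, with inverse the fibrewise cost-exponential map. Because $Du(x)=-D_xc(x,F_u(x))$ wherever $F$ is differentiable, $\Psi$ carries $\Gamma$ into the graph of the subdifferential $\partial u$ of the $c$-convex potential $u$; and since $c$ is uniformly semi-concave on compact sets, $u$ is semi-convex, i.e.\ locally $u=\tilde u-f$ with $\tilde u$ convex and $f$ smooth, so $\partial u=\partial\tilde u-Df$. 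The graph of the monotone operator $\partial\tilde u$ is a Lipschitz $n$-manifold by the classical Minty parametrisation --- the map $(x,p)\mapsto x+p$ restricts to it as a bi-Lipschitz homeomorphism onto its image in $\mathbf{R}^n$, by monotonicity --- hence it is countably $n$-rectifiable with locally finite $\mathcal{H}^n$-measure. Transporting this conclusion back through the $C^1$ (so locally bi-Lipschitz) diffeomorphisms $\Psi^{-1}$ and $(x,q)\mapsto(x,q+Df(x))$ gives the same properties for $\Gamma$ in the chart. This is essentially the argument of \cite{MPW}, and it is the precise sense in which the picture ``locally looks like Mealy's'', with Lipschitz graphs.

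Assembling over the finite subcover, $\Gamma$ is countably $n$-rectifiable and, being compact, has $\mathcal{H}^n(\Gamma)<\infty$. To orient it naturally, I would use that by (\ref{eq:graph is spacelike}) the pullback of $h_c$ to the smooth part of the graph is positive semidefinite, so at $\mathcal{H}^n$-a.e.\ point the approximate tangent plane is genuinely spacelike: the degenerate (``vertical'') part of $\Gamma$ --- where $u$, or the dual potential $v$, fails to be differentiable, equivalently where the Minty Jacobian has an eigenvalue $0$ or $1$ --- projects under $\pi$, respectively $\bar\pi$, into a Lebesgue-null subset of $M$, respectively $\bar M$, and the measure-preservation (\ref{eq:measure preserving}) together with the standing strict positivity of $\rho$ and $\bar\rho$ then forces that part to carry no $\mathcal{H}^n$-mass. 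On the (full-measure) spacelike part the space orientation form $\tau$ is nonzero on exactly one of the two orientations of each tangent plane; this selection is measurable and, by the consistency of $\tau$ established in \S\ref{S:orientation}, globally coherent. Hence $\Gamma$ becomes a well-defined oriented multiplicity-one rectifiable set, and $T_\Gamma(\psi):=\int_\Gamma\psi$ is a current with compact support in $N$, which is the asserted current.

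The main obstacle is the middle step: producing the rectifiability and local $\mathcal{H}^n$-finiteness of $\Gamma$, together with the $\mathcal{H}^n$-negligibility of its degenerate part needed to pin down the $\tau$-orientation. Both rest on the Lewy transformation, which converts the $c$-cyclical monotonicity of $\Gamma$ into ordinary monotonicity of a subdifferential and so reduces these to the well-known Lipschitz structure of monotone operators; without (A1)--(A2) and the semi-concavity of $c$ this reduction is unavailable, and without strict positivity of the marginals the vertical part of $\Gamma$ need not be negligible.
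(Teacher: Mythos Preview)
Your rectifiability argument is essentially the paper's: the Lewy transformation $\Psi(x,\bar x)=(x,-D_xc(x,\bar x))$ followed by the Minty map $(x,p)\mapsto x+p$ is, in coordinates chosen so that $D\bar Dc=-I_n$ at the base point, precisely the paper's map $G(x,\bar x)=x+\bar x$, and both presentations draw on \cite{MPW}. The paper works directly from the Taylor expansion of the $c$-monotonicity inequality to show that $G$ has Lipschitz inverse on $\Gamma$; you reach the same conclusion by routing through the semi-convexity of $u$ and the classical Lipschitz structure of monotone operators. These are two packagings of one idea.

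Where your argument diverges, and has a gap, is the orientation step. You claim the degenerate part of $\Gamma$ is $\mathcal H^n$-null because it projects to Lebesgue-null sets in both $M$ and $\bar M$, invoking measure-preservation and the strict positivity of $\rho,\bar\rho$. But measure-preservation concerns the transport plan $\gamma$, not $\mathcal H^n$ on $\Gamma$: a subset of $\Gamma$ that projects to null sets in both factors has $\gamma$-measure zero, yet may still carry positive $\mathcal H^n$-measure. For $n\ge 2$, a piece of a $c$-monotone graph with tangent plane $\mathrm{span}\{\partial_{x_1},\partial_{\bar x_2}\}$ illustrates this possibility, and on such a plane the form $\tau=\tfrac12(dx+d\bar x)$ vanishes. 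So $\tau$-selection may fail on a set you have not shown to be negligible.

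The remedy is what the paper actually does, and your own construction already supplies it: orient by pushing forward the standard orientation of $\mathbf R^n$ through the bi-Lipschitz inverse of the Minty (equivalently $G$) parametrisation, i.e.\ set $T_K=(G^{-1})_\#\bigl(\chi_{G(\Gamma\cap K)}\,\vec T\mathbf R^n\bigr)$ on each chart. This produces the current together with its orientation in one stroke; the local orientations agree on overlaps because the local maps $G$ all have differentials close to the inverse of the smoothly varying $-D\bar Dc$, so one may sum over a partition of unity without cancellation. No separate negligibility argument for the degenerate part is needed to \emph{define} the current; that issue resurfaces only later, in the calibration claim, where the paper handles it by a different route.
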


Proof: \ We begin with the following, due to \cite{MPW}. \ 

\begin{proposition}
Let $F$ be an optimal transportation map (possible multivalued)\ and suppose
that c is smooth and satisfies (A2)\ in a neighborhood of the image
\[
\Gamma=\text{cl}\left(  Id\times F\right)  (M)\subset M\times\bar{M}.
\]
Then the image is $n$-rectifiable and locally finite.
\end{proposition}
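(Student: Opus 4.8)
The plan is to reduce the statement to a local assertion in coordinate charts and then invoke the \emph{Lewy transformation}, under which $\Gamma$ becomes a piece of the graph of the subdifferential of a convex function; such a graph is $n$-rectifiable with locally finite $\mathcal{H}^{n}$-measure by Minty's monotone-operator trick, and since the Lewy map is a smooth local diffeomorphism near $\Gamma$ this structure is inherited by $\Gamma$. To begin, since $\Gamma=\cl(Id\times F)(M)$ is $c$-cyclically monotone (as recalled above), a Rockafellar-type construction produces a $c$-convex function $u$ on $M$ with $\Gamma\subset\partial^{c}u=\{(x,\bar{x}):u(y)-u(x)\ge c(x,\bar{x})-c(y,\bar{x})\ \text{for all}\ y\in M\}$. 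Fix $(x_{0},\bar{x}_{0})\in\Gamma$. Using that $c$ is smooth and (A2) holds near $\Gamma$, together with uniform semi-concavity of $c$ on compact sets, choose product coordinate charts $K\times\bar{K}\ni(x_{0},\bar{x}_{0})$ and a smooth function $g$ on $K$ so that $c$ is smooth and $\det D\bar{D}c\ne 0$ on $K\times\bar{K}$ and $\phi:=u+g$ is convex on $K$ (shrinking $K$ if necessary; $u$ is semi-convex there as a sup of uniformly semi-convex functions $-c(\cdot,\bar{x})+\mathrm{const}$).

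Next, define the Lewy map $\Psi(x,\bar{x}):=(x,\,p)$ on $K\times\bar{K}$ with $p:=Dg(x)-D_{x}c(x,\bar{x})$. For $(x,\bar{x})\in\Gamma\cap(K\times\bar{K})$ the defining inequality of $\partial^{c}u$ gives $\phi(y)-\phi(x)\ge H(y)$ for $y\in K$, where $H(y):=c(x,\bar{x})-c(y,\bar{x})+g(y)-g(x)$ is smooth with $H(x)=0$; hence $\phi-H$ attains a local minimum at $x$, which, since $\phi$ is convex and $H$ smooth, forces $p=DH(x)\in\partial\phi(x)$. Thus $\Psi$ carries $\Gamma\cap(K\times\bar{K})$ into the subdifferential graph $\partial\phi:=\{(x,p):p\in\partial\phi(x)\}\subset\R^{n}\times\R^{n}$. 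Moreover $D_{\bar{x}}p=-D\bar{D}c$ is nondegenerate by (A2), so the Jacobian of $\Psi$ is block-triangular with diagonal blocks $I$ and $-D\bar{D}c$, hence invertible; after a further shrinking, $\Psi$ is a smooth diffeomorphism of $K\times\bar{K}$ onto an open subset of $\R^{n}\times\R^{n}$.

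Now, for a finite convex $\phi$ on an open convex subset of $\R^{n}$, the resolvent $(I+\partial\phi)^{-1}$ is nonexpansive, so Minty's trick shows $(x,p)\mapsto x+p$ is a bijection of $\partial\phi$ onto its image with $1$-Lipschitz inverse; equivalently, in the rotated coordinates $(x+p,\,x-p)$ the set $\partial\phi$ is a Lipschitz graph over $\R^{n}$. Consequently $\partial\phi$ is $n$-rectifiable with locally finite $\mathcal{H}^{n}$-measure. Pushing this forward through the smooth, locally bi-Lipschitz map $\Psi^{-1}$ shows that $\Gamma\cap(K\times\bar{K})$ is $n$-rectifiable with locally finite $\mathcal{H}^{n}$-measure. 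Finally, the set on which $c$ is smooth with $\det D\bar{D}c\ne 0$ is open and contains $\Gamma$, and $M\times\bar{M}$ is second countable, so $\Gamma$ is covered by countably many charts of the above type; both $n$-rectifiability and local finiteness of $\mathcal{H}^{n}$-measure pass to countable unions, which proves the claim.

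The main obstacle is the second step: checking that the Lewy transformation genuinely converts the $c$-monotonicity encoded by $\Gamma$ into ordinary monotonicity of $\partial\phi$. This requires assembling (i) the containment $\Gamma\subset\partial^{c}u$ for a $c$-convex potential, (ii) the semi-convexity of $u$ forced by uniform semi-concavity of $c$, so that the correcting function $g$ exists, and (iii) the bookkeeping that turns the $c$-supporting inequality into the statement $DH(x)\in\partial\phi(x)$. One must also keep the cut locus $\mathfrak{C}$ disjoint from $\Gamma$ — which is exactly the hypothesis — so that $\Psi$ is defined and smooth in a neighborhood of $\Gamma$, and note that the possible multivaluedness of $F$ causes no difficulty because the entire argument is carried out on the closed set $\Gamma$ itself rather than on $F$.
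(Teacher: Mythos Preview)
Your argument is correct and shares the same underlying mechanism as the paper's sketch --- both ultimately show that, in suitable local coordinates, the ``sum'' map $(x,\bar x)\mapsto x+\bar x$ (or $(x,p)\mapsto x+p$) restricts to $\Gamma$ as an injection with Lipschitz inverse --- but you arrive there by a different route. The paper (following \cite{MPW}) works directly with the two-point $c$-monotonicity inequality: after choosing coordinates near $(x_0,\bar x_0)$ with $D\bar D c=-I_n$, a second-order Taylor expansion of the inequality $c(x,\bar x)+c(0,0)\le c(x,0)+c(0,\bar x)$ yields $x\cdot\bar x\ge O(3)$, from which $|x+\bar x|^2\ge |x|^2+|\bar x|^2-O(3)$, so $G(x,\bar x)=x+\bar x$ is bi-Lipschitz on a small neighborhood of $\Gamma$. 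No potential is needed; only two-point monotonicity and (A2) enter. Your route instead invokes the full $c$-cyclical monotonicity to produce a $c$-convex potential $u$, uses the assumed uniform semi-concavity of $c$ to convexify $\phi=u+g$, and then applies a genuine Lewy change of variables $\Psi(x,\bar x)=(x,\,Dg(x)-D_xc(x,\bar x))$ to embed $\Gamma$ into the subdifferential graph $\partial\phi$, where the classical Minty resolvent trick applies.

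What each buys: the paper's direct approach is more self-contained, uses only the weakest monotonicity hypothesis, and produces exactly the coordinate map $G$ that is later reused to define the current $T$ and verify its $\tau$-orientation (via $G^*dx=dx+d\bar x$). Your approach is more conceptual, making the reduction to standard convex analysis explicit and giving a cleaner reason why the Lipschitz-graph structure appears (it is literally Minty's theorem for $\partial\phi$); it also makes transparent that the possible multivaluedness of $F$ is irrelevant, since everything happens on the closed set $\Gamma\subset\partial^c u$. Either way the conclusion and the essential geometric input --- nondegeneracy of $D\bar D c$ turning $c$-monotonicity into ordinary monotonicity --- are the same.
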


The proof of this proposition is contained in \cite{MPW}. \ \ We give the idea
here, as we need the construction to define the current. \ Near any point
$\left(  x,\bar{x}\right)  $ in $\Gamma$ one can choose coordinates so that
$D\bar{D}c=-I_{n}$ and $\left(  x,\bar{x}\right)  $ is the origin. \ Near
$\left(  x,\bar{x}\right)  $ \ expand $c$ as a second order Taylor polynomial
plus third order remainder. \ From the Taylor expansion together with
$c$-monotonicity (\ref{2monotone})
\[
c(x,\bar{x})+c(0,0)\leq c(x,0)+c(0,\bar{x})
\]
one can show that
\[
x\cdot\bar{x}\geq O(3)
\]
so that for a small enough neighborhood $U$,
\[
\left\vert x+\bar{x}\right\vert =\sqrt{|x|^{2}+|\bar{x}|^{2}+2x\cdot\bar{x}%
}\geq\sqrt{|x|^{2}+|\bar{x}|^{2}-O(3)}%
\]
in particular the map $G:\Gamma\rightarrow%
\mathbb{R}
^{n}$%
\[
G(x,\bar{x})=x+\bar{x}%
\]
is injective with Lipschitz inverse. \ \ Choose a compact $K\subset U,$ then
\[
E:=G(\Gamma\cap K)\subset%
\mathbb{R}
^{n}\text{ }%
\]
is a closed subset. \ One can compute that $G^{-1\text{ }}$ is Lipschitz on
$E$ and so extends to a Lipschitz map on an open set containing $E.$ \ \ Now
define the currents, first on $%
\mathbb{R}
^{n}$
\[
S=\chi_{E}\vec{T}_{x}%
\mathbb{R}
^{n}%
\]
and then on $M\times\bar{M}$
\[
T_{K}=\left(  G^{-1\text{ }}\right)  _{\#}S.
\]
The result is a current on $M\times\bar{M}$ \ which is supported on the set
$\Gamma\cap K$ and is represented by the tangent space to $\Gamma$ wherever it
exists, which is almost everywhere $d\mathcal{H}^{n}.$

Observe that
\[
G^{\ast}dx=dx+d\bar{x}%
\]
which, for our purposes, is the space orientation form. It follows that the
tangent spaces to $\left(  G^{-1\text{ }}\right)  _{\#}S$ are appropriately
oriented, given the discussion in subsection \ref{S:orientation}.\ In
particular there will be no cancellation when summing over a partition of
unity. \ Given any precompact neighborhood $U~$in $M\times\bar{M}$ we can
cover $\Gamma\cap U$ with finitely many open sets chosen as above. \ Taking a
(locally finite)\ partition of unity, we build the current $T.$

\bigskip

Now that we have an integral current, we can write $T$ as
\begin{equation}
T(\eta)=\int_{{\Gamma}}\langle\eta(x),\xi(x)\rangle d\mathcal{H}^{n}
\label{eq:T}%
\end{equation}
where $\xi(x)$ is an $\mathcal{H}^{n}$ almost everywhere simple unit tangent
$n$-vector to {$\Gamma$}.

\bigskip

\subsection{The current {$T$} is calibrated}

\ Before we continue we need the following,

\begin{lemma}
\label{prop:nonnegative mass} Let $\Gamma\subset M\times\bar{M}$ be any
$c$-monotone set. \ If $V+\bar{V}$ is a vector (decomposed in the obvious way)
in the tangent space to $\Gamma$ at $(x_{0},\bar{x}_{1})$ then
\[
-\partial_{V}\partial_{\bar{V}}c|_{(x,\bar{x})=(x_{0},\bar{x}_{1})}\geq0.
\]
In particular, $V+\bar{V}$ is weakly spacelike. \ 
\end{lemma}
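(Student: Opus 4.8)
The plan is to derive the sign of the mixed Hessian $D\bar Dc$ along a tangent vector directly from the two-point $c$-monotonicity inequality \eqref{2monotone}, comparing the base point $(x_0,\bar x_1)\in\Gamma$ with points of $\Gamma$ approaching it along the prescribed tangent direction. Set $W:=V+\bar V\in T_{x_0}M\oplus T_{\bar x_1}\bar M$. Since $W$ lies in the tangent space of $\Gamma$ at $(x_0,\bar x_1)$, I may choose a sequence $(x_k,\bar x_k)\in\Gamma$ with $(x_k,\bar x_k)\to(x_0,\bar x_1)$ and secant directions converging to $W/|W|$; writing $u_k:=x_k-x_0$ and $v_k:=\bar x_k-\bar x_1$, this means $u_k/|(u_k,v_k)|\to V/|W|$ and $v_k/|(u_k,v_k)|\to\bar V/|W|$. (In every situation where this lemma is invoked $\Gamma$ is locally a Lipschitz graph — via the chart $G(x,\bar x)=x+\bar x$ of the previous subsection, or the graph of a differentiable $F$ — so one may simply take $(x_k,\bar x_k)$ on the curve $s\mapsto(\gamma(s),F(\gamma(s)))$ with $\dot\gamma(0)=V$ and the existence of such a sequence is immediate.)

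Applying \eqref{2monotone} to the pair $\{(x_0,\bar x_1),(x_k,\bar x_k)\}$ gives
$c(x_0,\bar x_1)+c(x_k,\bar x_k)-c(x_0,\bar x_k)-c(x_k,\bar x_1)\le 0$. Introducing $G(s,t):=c(x_0+su_k,\bar x_1+tv_k)$ for $s,t\in[0,1]$ and applying the fundamental theorem of calculus twice, the left-hand side equals $G(0,0)+G(1,1)-G(0,1)-G(1,0)=\int_0^1\!\!\int_0^1\partial_s\partial_t G(s,t)\,ds\,dt$, and $\partial_s\partial_t G(s,t)=u_k^{\,T}\,(D\bar Dc)(x_0+su_k,\bar x_1+tv_k)\,v_k$. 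Here I use that $c$ is smooth on a neighborhood of $\Gamma$ — exactly the standing hypothesis in this part of the paper — so that $D\bar Dc$ is continuous near $(x_0,\bar x_1)$.

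Dividing the resulting inequality $\int_0^1\!\!\int_0^1 u_k^{\,T}(D\bar Dc)(x_0+su_k,\bar x_1+tv_k)v_k\,ds\,dt\le 0$ by $|(u_k,v_k)|^2$ and letting $k\to\infty$, continuity of $D\bar Dc$ forces the integral to converge to $(D\bar Dc)(x_0,\bar x_1)$, and the normalized vectors converge as above, so one obtains $V^{T}(D\bar Dc)(x_0,\bar x_1)\bar V\le 0$, i.e.\ $\partial_V\partial_{\bar V}c|_{(x_0,\bar x_1)}\le 0$, which is the claimed inequality $-\partial_V\partial_{\bar V}c\ge 0$. For the final assertion, reading off the block form \eqref{eq:h} gives $h_c(W,W)=h_c(V+\bar V,V+\bar V)=-2\,V^{T}(D\bar Dc)(x_0,\bar x_1)\bar V=-2\,\partial_V\partial_{\bar V}c\ge 0$, so $W$ has nonnegative $h_c$-length, i.e.\ it is weakly spacelike.

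The only genuine subtlety is the very first step: justifying that an arbitrary vector of the (approximate) tangent plane of the rectifiable set $\Gamma$ arises as a limit of secant directions of points of $\Gamma$. This is standard at $\mathcal H^n$-almost every point of a rectifiable set, and — as noted — in all the applications $\Gamma$ is locally a Lipschitz (or differentiable) graph, where it is transparent; the rest of the argument is the routine second-difference expansion above.
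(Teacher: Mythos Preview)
Your proof is correct and takes essentially the same approach as the paper's: both extract the sign of $D\bar Dc(V,\bar V)$ by a second-order expansion of the two-point $c$-monotonicity inequality \eqref{2monotone} along the given tangent direction at $(x_0,\bar x_1)$. The only difference is packaging --- the paper parametrizes by a curve $t\mapsto S(t)\in\Gamma$ and differentiates twice (the $s''(0)$ terms cancel), whereas you use secants and the integral form of the second difference, which has the minor advantage of not needing a $C^2$ path in $\Gamma$ and so sits a bit more comfortably with the rectifiable/Lipschitz-graph setting.
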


\begin{proof}
Let $S(t)=(s(t),\bar{s}(t))$ be a path in $\Gamma$ with $s(0)=$ $(x_{0}%
,\bar{x}_{1})$ and $s^{\prime}(0)=V+\bar{V}.$ \ By the monotonicity condition
at $t=0$

First,at $t=0$
\begin{align*}
\frac{d}{dt}\left\{  c(s(t),\bar{s}(t))+c(x_{0},\bar{x}_{1})-c(s(t),\bar
{x}_{1})-c(x_{0},\bar{s}(t))\right\}   &  =\\
Dc\cdot s^{\prime}(0)+\bar{D}c\cdot\bar{s}^{\prime}(0)-Dc\cdot s^{\prime
}(0)-D\bar{c}\cdot\bar{s}^{\prime}(0)  &  =0
\end{align*}
and using the monotonicity condition at $t=0$
\begin{multline*}
\frac{d}{dt}\left\{  c(s(t),\bar{s}(t))+c(x_{0},\bar{x}_{1})-c(s(t),\bar
{x}_{1})-c(x_{0},\bar{s}(t))\right\}  =\\
D^{2}c(s^{\prime}(0),s^{\prime}(0))+Dc\cdot s^{\prime\prime}(0)+2D\bar
{D}c(s^{\prime}(0),\bar{s}^{\prime}(0))+\bar{D}^{2}c(\bar{s}^{\prime}%
(0),\bar{s}^{\prime}(0))\\
+\bar{D}c\cdot\bar{s}^{\prime\prime}(0)-D^{2}c(s^{\prime}(0),s^{\prime
}(0))-Dc\cdot s^{\prime\prime}(0)-\bar{D}^{2}c(\bar{s}^{\prime}(0),\bar
{s}^{\prime}(0))-\bar{D}c\cdot\bar{s}^{\prime\prime}(0)\leq0
\end{multline*}
in particular $2D\bar{D}c(V,\bar{V})\leq0.$
\end{proof}

\begin{lemma}
\label{comass negative} Suppose that $\xi$ is an $n$-vector representing the
tangent space to the current $T$ at $x,$ and $\eta$ is a form such that
$\langle\eta_{x},\xi\rangle<0.$ \ Then $\eta$ has negative comass. \ 

\begin{proof}
First, we note by construction, that the tangent planes are $\tau$-oriented.
\ \ \ If $\xi$ is strictly spacelike, the result follows from definition
\ref{def:comass} by scaling $\xi$ to unit size. \ \ \ So suppose that $\xi$ is
only weakly spacelike (this is the case be Lemma \ref{prop:nonnegative mass}%
).\ Rotating coordinates, we consider the metric in the form $dx^{2}-dy^{2.}$
\ Take a basis for $\xi$, $\ \ $say $\{e_{i}\},$ and note that for each
$e_{i},$%
\[
dx^{2}(e_{i},e_{i})\geq dy^{2}(e_{i},e_{i})\geq0
\]
and $dx^{2}(e_{i},e_{i})$ is strictly positive for some $i.$ \ \ \ Now we make
a perturbation of each $e_{i}$ by scaling it slightly in the $x$ component.
\ The plane spanned by these vectors $\xi_{\varepsilon},$ is then strictly
spacelike. \ By continuity, we conclude that $\langle\eta,\xi\rangle<0$ for
some strictly spacelike plane, so has negative comass.
\end{proof}
\end{lemma}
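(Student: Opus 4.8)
The plan is to produce, for the given point $x$, a simple $\tau$-oriented spacelike unit $n$-vector $\zeta\in\mathcal{P}_x$ with $\eta_x(\zeta)<0$; by Definition~\ref{def:comass} this forces $\|\eta_x\|_h^{\ast}=\inf_{\mathcal{P}_x}\eta_x(\cdot)<0$, which is exactly the assertion. First I would record the properties of $\xi$. By the construction of $T$ (Subsection~\ref{S:orientation} onward), any approximate tangent $n$-vector of $T$ is simple, represents a $\tau$-oriented plane so that $\tau(\xi)>0$, and is tangent to the $c$-monotone set $\Gamma$; hence by Lemma~\ref{prop:nonnegative mass} it is weakly spacelike, $h|_{\operatorname{span}\xi}\geq 0$. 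If $\xi$ is in fact strictly spacelike, then $\|\xi\|_h>0$ by \eqref{n-volume}, the rescaling $\hat\xi:=\xi/\|\xi\|_h$ lies in $\mathcal{P}_x$ (simplicity is preserved, and $\tau$ is linear on $\Lambda^n T_x N$ so $\tau(\hat\xi)>0$), and $\eta_x(\hat\xi)=\eta_x(\xi)/\|\xi\|_h<0$; so we are done in that case. It remains to treat $\xi$ that is weakly but not strictly spacelike.

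For that case I would mimic the proof of Proposition~\ref{prop:space orientation disconnected}: pass to a linear frame at $x$ in which $h=dx^2-dy^2$ with $(x,y)\in\R^n\times\R^n$, and pick a basis $e_i=(a_i,b_i)$ of $\operatorname{span}\xi$. Because $n$ is the maximal dimension of a spacelike plane and $\xi$ is weakly spacelike, the projection onto the first $\R^n$ is injective on $\operatorname{span}\xi$ (a vector $(0,b)$ in the plane has $h$-norm $-|b|^2\leq 0$, hence $b=0$); so every nonzero $v=(a,b)\in\operatorname{span}\xi$ has $a\neq 0$ and, by weak spacelikeness, $|a|^2\geq|b|^2$. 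Now set $e_i^{\varepsilon}:=(a_i,(1-\varepsilon)b_i)$ and $\xi_\varepsilon:=e_1^\varepsilon\wedge\cdots\wedge e_n^\varepsilon$ for $\varepsilon\in(0,1)$. A general vector of $\operatorname{span}\xi_\varepsilon$ equals $(a,(1-\varepsilon)b)$ with $(a,b)\in\operatorname{span}\xi$, of squared $h$-norm $|a|^2-(1-\varepsilon)^2|b|^2\geq\big(1-(1-\varepsilon)^2\big)|a|^2$, which is positive unless the vector is zero; so $\xi_\varepsilon$ is simple and strictly spacelike, with $\|\xi_\varepsilon\|_h>0$. Since $e_i^\varepsilon\to e_i$ we have $\xi_\varepsilon\to\xi$ in $\Lambda^n T_x N$, hence $\tau(\xi_\varepsilon)\to\tau(\xi)>0$ and $\eta_x(\xi_\varepsilon)\to\eta_x(\xi)<0$ by continuity of these multilinear forms; fix $\varepsilon$ small so that both $\tau(\xi_\varepsilon)>0$ and $\eta_x(\xi_\varepsilon)<0$ hold. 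Then $\zeta:=\xi_\varepsilon/\|\xi_\varepsilon\|_h\in\mathcal{P}_x$ and $\eta_x(\zeta)=\eta_x(\xi_\varepsilon)/\|\xi_\varepsilon\|_h<0$, giving $\|\eta_x\|_h^{\ast}<0$.

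The one point requiring care is the verification that the perturbed plane $\xi_\varepsilon$ is genuinely (strictly) spacelike, not merely spacelike along the perturbed basis vectors; this is where weak spacelikeness from Lemma~\ref{prop:nonnegative mass} and the maximality of $n$ enter essentially, since together they force the projection onto the first $\R^n$ to be an isomorphism on $\operatorname{span}\xi$, which is precisely what makes the elementary estimate $|a|^2-(1-\varepsilon)^2|b|^2>0$ valid for \emph{every} nonzero vector of $\operatorname{span}\xi_\varepsilon$. Everything else is continuity of $\tau$, of $\eta_x$, and of $\xi'\mapsto\|\xi'\|_h$ on the open cone of strictly spacelike simple $n$-vectors, together with the homogeneity of $\|\cdot\|_h$ and of $\tau$ under positive scaling.
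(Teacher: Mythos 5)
Your proof follows the same strategy as the paper: dispose of the strictly spacelike case by rescaling, then pass to coordinates in which $h=dx^2-dy^2$, perturb the weakly spacelike $\xi$ to a strictly spacelike $\xi_\varepsilon$ by adjusting the $x$- or $y$-components of a basis, and conclude by continuity of $\tau$ and $\eta_x$. You are in fact more careful than the paper at the decisive step — proving that the perturbed $n$-plane (not merely the perturbed basis vectors) is strictly spacelike, via the observation that the projection onto the first $\mathbb{R}^n$ factor is injective on $\operatorname{span}\xi$ so that $|a|^2\geq|b|^2$ for every vector of the plane — a point the paper essentially asserts without justification.
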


\begin{claim}
The infimum in the definition of mass for the current $T$ is attained by the
calibration $\Phi.$
\end{claim}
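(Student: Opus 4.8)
The plan is to show two things: first, that $T(\Phi)$ equals the pseudo-Riemannian volume $\int_\Gamma \|\xi(x)\|_h \, d\mathcal{H}^n = \|T\|_{(N,h)}$ would at most be, and second, that no competitor form can do better. Concretely, I would argue that $\Phi$ is admissible in the infimum defining $\|T\|_{(N,h)}$, namely that $\|\Phi\|^*_{(N,h)} \geq 1$; this is exactly Corollary \ref{cor:cal}, which states $\Phi$ is a calibration, so I may quote it. Hence $\|T\|_{(N,h)} \leq T(\Phi)$ automatically. The substance is the reverse inequality $T(\psi) \geq T(\Phi)$ for every $\psi$ with $\|\psi\|^*_{(N,h)} \geq 1$.

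For the reverse inequality I would use the representation \eqref{eq:T}, $T(\psi) = \int_\Gamma \langle \psi(x), \xi(x)\rangle \, d\mathcal{H}^n$, where $\xi(x)$ is the $\mathcal{H}^n$-a.e.\ defined simple unit tangent $n$-vector, which by construction is $\tau$-oriented and, by Lemma \ref{prop:nonnegative mass}, weakly spacelike. The key pointwise claim is: if $\|\psi_x\|^*_h \geq 1$, then $\langle \psi_x, \xi(x)\rangle \geq \|\xi(x)\|_h$ at $\mathcal{H}^n$-a.e.\ point where $\xi$ is strictly spacelike, this is immediate from the definition of oriented comass by normalizing $\xi(x)$ to unit $h$-size. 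On the (possibly large) set where $\xi$ is only weakly spacelike one invokes the perturbation argument of Lemma \ref{comass negative}: perturb $\xi(x)$ to strictly spacelike $\xi_\varepsilon$, apply the comass bound to $\xi_\varepsilon$, and let $\varepsilon \to 0$; since $\psi_x$ is continuous and $\|\xi_\varepsilon\|_h \to \|\xi(x)\|_h = 0$ in the degenerate directions, one gets $\langle \psi_x, \xi(x)\rangle \geq \|\xi(x)\|_h$ in the limit. Integrating over $\Gamma$ yields $T(\psi) \geq \int_\Gamma \|\xi(x)\|_h \, d\mathcal{H}^n$. Finally one checks that $\Phi$ actually realizes this lower bound: at almost every point of $\Gamma$ the transport map $F$ is differentiable (by Proposition (c.f.\ \cite{GM})) and satisfies the measure-preserving and Lagrangian conditions \eqref{eq:measure preserving}, \eqref{eq:graph is Lagrangian}, so by the equality case of Proposition \ref{prop:calibrated}, $\langle \Phi_x, \xi(x)\rangle = \|\xi(x)\|_h$ ($= \rho(x)$ in the coordinates used there). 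Therefore $T(\Phi) = \int_\Gamma \|\xi(x)\|_h \, d\mathcal{H}^n \leq T(\psi)$, proving $T(\Phi) = \|T\|_{(N,h)}$.

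I would present the argument in the order: (i) recall $\Phi$ is a calibration, giving $\|\Phi\|^*_{(N,h)}\geq 1$, so $\Phi$ is a competitor and $\|T\|_{(N,h)}\leq T(\Phi)$; (ii) for arbitrary competitor $\psi$, establish the pointwise inequality $\langle\psi_x,\xi(x)\rangle\geq\|\xi(x)\|_h$ a.e.\ on $\Gamma$, handling the weakly-spacelike locus by the perturbation lemma; (iii) integrate to get $T(\psi)\geq\int_\Gamma\|\xi\|_h\,d\mathcal{H}^n$; (iv) use the equality case of Proposition \ref{prop:calibrated} together with a.e.\ differentiability of $F$ to identify $T(\Phi)$ with that same integral, closing the loop.

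The main obstacle is step (ii) on the weakly-spacelike part of $\Gamma$, which is genuinely nonempty in general because Lemma \ref{prop:nonnegative mass} only gives $-\partial_V\partial_{\bar V}c\geq 0$, not strict positivity. One must make sure the perturbation $\xi \mapsto \xi_\varepsilon$ and the passage to the limit are carried out so that the inequality $\langle\psi_x,\xi(x)\rangle\geq\|\xi(x)\|_h$ survives; a subtle point is that $\|\xi_\varepsilon\|_h$ need not decrease monotonically to $\|\xi(x)\|_h$, and $\langle\psi_x,\xi_\varepsilon\rangle$ must be controlled uniformly, but since everything is evaluated at a single point $x$ with $\psi_x$ a fixed multilinear form this is a finite-dimensional continuity argument. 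A secondary technical point is measurability/integrability of $x\mapsto\|\xi(x)\|_h$ and of $x\mapsto\langle\psi_x,\xi(x)\rangle$ on $\Gamma$, which follows from $n$-rectifiability and the a.e.\ existence of the approximate tangent plane established in the previous subsection.
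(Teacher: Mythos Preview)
Your steps (i)--(iii) are correct: the pointwise inequality $\langle\psi_x,\xi(x)\rangle\geq\|\xi(x)\|_h$ for any competitor $\psi$ follows exactly as you say (the contrapositive of Lemma~\ref{comass negative} handles the weakly spacelike points, where $\|\xi(x)\|_h=0$), and integrating gives $T(\psi)\geq\int_\Gamma\|\xi\|_h\,d\mathcal{H}^n$.

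The gap is in step (iv). You assert that ``at almost every point of $\Gamma$ the transport map $F$ is differentiable,'' citing the proposition from \cite{GM}. But that proposition gives differentiability of $F_u$ \emph{Lebesgue}-a.e.\ on $M$ (and of $F_v$ on $\bar M$), not $\mathcal{H}^n$-a.e.\ on $\Gamma$. These are genuinely different notions: $\Gamma$ may contain pieces of positive $\mathcal{H}^n$-measure lying over Lebesgue-null sets in $M$ (``vertical'' pieces where $F_u$ is multivalued). One can try to fall back on $F_v$ there, but a residual set $\Gamma_0\subset\Gamma$ projecting to null sets in \emph{both} $M$ and $\bar M$ is not excluded, and you give no argument that $\mathcal{H}^n(\Gamma_0)=0$ or that $\langle\Phi,\xi\rangle=\|\xi\|_h$ on it. Without this your identity $T(\Phi)=\int_\Gamma\|\xi\|_h\,d\mathcal{H}^n$ is unproven, and the chain $T(\Phi)\leq T(\psi)$ does not close.

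The paper avoids this by arguing by contradiction instead of computing $T(\Phi)$ directly. If some competitor $\varphi$ had $T(\varphi)<T(\Phi)$, then $\langle\varphi,\xi\rangle<\langle\Phi,\xi\rangle$ on a set $\Upsilon\subset\Gamma$ of positive $\mathcal{H}^n$-measure. Either $\Upsilon$ projects to positive Lebesgue measure in $M$ or $\bar M$, in which case $\Upsilon$ contains a \emph{single} point where $F_u$ (or $F_v$) is differentiable, and Proposition~\ref{prop:calibrated} at that one point gives $\langle\Phi,\xi\rangle=\|\xi\|_h\leq\langle\varphi,\xi\rangle$, a contradiction; or both projections of $\Upsilon$ are null, and then --- because $\Phi$ is precisely a sum of the pulled-back volume forms $\rho\,dx$ and $\bar\rho\,d\bar x$ --- the area formula forces $\langle\Phi,\xi\rangle=0$ a.e.\ on $\Upsilon$, whence $\langle\varphi,\xi\rangle<0$, contradicting Lemma~\ref{comass negative}. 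The key economy is that the paper needs differentiability only at one point, and handles the doubly-null residue by exploiting the special form of $\Phi$. Your direct argument can be repaired by importing exactly this two-case projection analysis into step (iv).
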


\begin{proof}
Assume not. \ There exists a form $\varphi$ of comass $1$ with the property
that $T(\varphi)<T(\Phi).$ \ By the Riesz Representation%
\[
\int_{{\Gamma}}\langle\varphi(x)-\Phi(x),\xi(x)\rangle d\mathcal{H}^{n}<0
\]
so we conclude that there is a rectifiable set $\Upsilon\subset{\Gamma}~$ with
positive $\mathcal{H}^{n}$ measure such that on $\Upsilon$
\begin{equation}
\langle\varphi(x),\xi(x)\rangle<\langle\Phi(x),\xi(x)\rangle.
\label{to contradict, shortly}%
\end{equation}
\ If $\Upsilon$ projects to a set of positive measure in either direction
(i.e. either to $M$ or $\bar{M}$), we can conclude that it contains a point
where the optimal transport map is differentiable, and we can draw a
contradiction from Proposition \ref{prop:calibrated}: At points of
differentiability the tangent planes are calibrated, it follows that
\[
\langle\Phi(x),\xi(x)\rangle=||\xi||_{h^{\rho,\bar{\rho}}}\leq\langle
\varphi(x),\xi(x)\rangle
\]
because $\varphi$ has comass $1.$ So we conclude that $\Upsilon$ is in the
inverse image of sets of zero measure under both projections. It follows then
the calibrating form (which measures the volumes of projections of a given
plan in either direction) must vanish on $\xi$: so in fact we have
$\langle\varphi(x),\xi(x)\rangle<0.$ \ \ But this contradicts Lemma
\ref{comass negative}.
\end{proof}

\bigskip

This leads to our main theorem.

\begin{theorem}
\label{th:main}Suppose that $\rho,\bar{\rho}$ are smooth densities, and that
$c$ is smooth and satisfies (A1) (A2). \ Suppose the minimizing measure
$\gamma$ is compactly supported away from the cut locus. \ Then the spacelike
current $T$ defined on spt($\gamma)$ is homologically mass maximizing:
$\left\Vert T\right\Vert _{(N,h^{\rho,\bar{\rho}})}\geq\left\Vert S\right\Vert
_{(N,h^{\rho,\bar{\rho}})}$ \ for all compactly supported $n$-currents $S$
which are homologous to $T.$
\end{theorem}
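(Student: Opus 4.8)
The plan is to run the standard calibration argument, adapted to the indefinite setting where mass is an infimum and the calibration inequality is reversed. First I would fix an arbitrary compactly supported $n$-current $S$ homologous to $T$, so that $S-T=\partial R$ for some $(n+1)$-current $R$ with compact support inside the region where $c$ is smooth and satisfies (A1), (A2) (this is where the hypothesis that $\gamma$ is supported away from the cut locus is used). Since $\Phi$ is closed by Corollary~\ref{cor:cal}, Stokes' theorem gives $S(\Phi)=T(\Phi)+\partial R(\Phi)=T(\Phi)+R(d\Phi)=T(\Phi)$.

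Next I would invoke the preceding Claim, which identifies $\|T\|_{(N,h^{\rho,\bar\rho})}=T(\Phi)$: the infimum defining the mass of $T$ is attained by $\Phi$. For $S$ I would use the other half of Definition~\ref{def:comass} together with Corollary~\ref{cor:cal}: since $\Phi$ is a calibration, $\|\Phi\|_{(N,h)}^{\ast}\geq1$, so $\Phi$ is admissible in the infimum $\|S\|_{(N,h^{\rho,\bar\rho})}=\inf_{\|\psi\|^{\ast}\geq1}S(\psi)$, whence $\|S\|_{(N,h^{\rho,\bar\rho})}\leq S(\Phi)$. Chaining these, $\|T\|=T(\Phi)=S(\Phi)\geq\|S\|$, which is exactly the asserted homological mass-\emph{maximizing} property. (The reversal of the usual inequality direction is automatic here because mass is now an infimum rather than a supremum, and because the calibration inequality $\Phi_x(\xi)\geq\|\xi\|_h$ points the other way; the conformal factor relating $h_c$ and $h_c^{\rho,\bar\rho}$ enters only through the computation already done in Proposition~\ref{prop:calibrated} showing $\Phi$ has comass $1$, so nothing new is needed.)

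I expect the one genuinely delicate point to be the Stokes step: I must ensure that $S$ and $T$ being homologous, with $S$ compactly supported, actually forces the bounding current $R$ to be compactly supported inside $N$ (away from $\mathfrak{C}$), so that $d\Phi=0$ may legitimately be applied and no boundary contribution from the cut locus appears. Here the assumption that $\gamma$ is compactly supported away from the cut locus, combined with the construction of $T$ as a locally finite integral current on a precompact neighborhood $U\subset N$ of $\Gamma=\mathrm{spt}(\gamma)$, should let me take $R$ supported in a slightly larger precompact subset of $N$; I would remark that homology is understood within $N$ (or within such a neighborhood). The remaining ingredients — closedness of $\Phi$, the comass-$1$ property, and the Riesz/attainment claim — are all in hand from the results above, so the proof is short once this localization is pinned down.

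\begin{proof}
Let $S$ be any compactly supported $n$-current homologous to $T$ in $N$, so that $S-T=\partial R$ for an $(n+1)$-current $R$ compactly supported in $N$ (which we may take inside a precompact neighborhood of $\mathrm{spt}(\gamma)$ on which $c$ is smooth and satisfies (A1), (A2), using the hypothesis that $\gamma$ is supported away from the cut locus). Since $\Phi$ is closed by Corollary~\ref{cor:cal}, Stokes' theorem gives
\[
S(\Phi)=T(\Phi)+(\partial R)(\Phi)=T(\Phi)+R(d\Phi)=T(\Phi).
\]
By the preceding Claim, the infimum defining the mass of $T$ is attained by $\Phi$, so $\|T\|_{(N,h^{\rho,\bar\rho})}=T(\Phi)$. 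On the other hand, Corollary~\ref{cor:cal} states that $\Phi$ is a calibration for $h^{\rho,\bar\rho}$; by Definition~\ref{def:calibration} this means $\|\Phi\|_{(N,h^{\rho,\bar\rho})}^{\ast}\geq1$, so $\Phi$ is among the forms over which the infimum defining $\|S\|_{(N,h^{\rho,\bar\rho})}$ is taken, whence
\[
\|S\|_{(N,h^{\rho,\bar\rho})}=\inf_{\|\psi\|_{(N,h^{\rho,\bar\rho})}^{\ast}\geq1}S(\psi)\leq S(\Phi).
\]
Combining the three displays,
\[
\|T\|_{(N,h^{\rho,\bar\rho})}=T(\Phi)=S(\Phi)\geq\|S\|_{(N,h^{\rho,\bar\rho})},
\]
which is the claimed inequality.
\end{proof}
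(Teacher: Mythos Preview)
Your proof is correct and follows essentially the same argument as the paper's: invoke the Claim to get $\|T\|=T(\Phi)$, use that $\Phi$ is a calibration (comass $\geq 1$) together with the definition of mass to get $\|S\|\leq S(\Phi)$, and use closedness of $\Phi$ plus $S$ homologous to $T$ to conclude $S(\Phi)=T(\Phi)$. The paper compresses the Stokes step into the phrase ``$\Phi(S)=\Phi(T)$ because $S$ is homologous to $T$,'' whereas you spell it out with the bounding current $R$ and flag the localization issue; otherwise the arguments are identical.
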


\begin{proof}
\bigskip By the previous claim, we have $\left\Vert T\right\Vert
_{h^{\rho,\bar{\rho}}}=\Phi(T).$ \ \ If $S$ is homologous to $T$ then by
definition of mass (\ref{def:comass}) \ $\left\Vert S\right\Vert
_{h^{\rho,\bar{\rho}}}\leq\Phi(S)=\Phi(T),$ because $\Phi$ has comass $1$ by
Corollary, \ref{cor:cal}. \ 
\end{proof}

\bigskip

\ \ We note further that as calibration arguments require that we work with
currents of compact support, for many situations the result does not directly
apply globally (if $M$ is not compact), however it will apply locally. \ Note
that given any optimal map $F:M\rightarrow\bar{M}$ the restriction of the
optimal\ map to any subset is an optimal map onto its image. and satisfies the
same relations ~(\ref{eq:graph is spacelike}) (\ref{eq:graph is Lagrangian})
and (\ref{eq:measure preserving}), hence locally is calibrated.

We can now compare a smooth space-like submanifold to smooth variations of the
submanifold, and we conclude:

\begin{corollary}
\label{cor:mean curv} \ At any point where the graph of the optimal transport
map $F$ is a $C^{1}$ submanifold in $(M\times\bar{M}$,$h_{c}^{(\rho,\bar{\rho
})})$ the graph has zero mean curvature.
\end{corollary}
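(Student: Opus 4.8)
The plan is to deduce Corollary~\ref{cor:mean curv} as a local consequence of Theorem~\ref{th:main} together with the standard first-variation characterization of mean curvature. First I would note that the hypothesis is local: fix a point $x_0$ where $\Gamma=(Id\times F)(M)$ is a $C^1$ submanifold, and pass to a small precompact neighborhood $\Omega\times\bar\Omega$ of $(x_0,F(x_0))$ inside $N$ (away from the cut locus), where $c$ is smooth and (A1),(A2) hold and where, by the remark preceding the corollary, the restriction of $F$ to $\pi(\Omega\times\bar\Omega)$ is still an optimal map onto its image and still satisfies \eqref{eq:graph is spacelike}, \eqref{eq:graph is Lagrangian}, \eqref{eq:measure preserving}. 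Thus the piece of graph $S_0:=\Gamma\cap(\Omega\times\bar\Omega)$ defines a spacelike current which, by Theorem~\ref{th:main} applied in this neighborhood, is homologically mass-maximizing: $\|S_0\|_{(N,h^{\rho,\bar\rho})}\geq\|S\|_{(N,h^{\rho,\bar\rho})}$ for every compactly supported $n$-current $S$ homologous to $S_0$ inside $\Omega\times\bar\Omega$.

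Next I would invoke the Proposition stating $\vol_h(S)=\|S\|_{(N,h)}$ for smooth compact spacelike submanifolds, so that on the $C^1$ locus the mass coincides with the pseudo-Riemannian volume $\vol_{h_c^{\rho,\bar\rho}}$. Take any smooth compactly supported normal variation $S_t$ of $S_0$ with $S_0=S_0$; for $|t|$ small the varied surfaces remain $C^1$ and spacelike (spacelikeness is an open condition, guaranteed near $t=0$ since the graph of $F$ is strictly spacelike where differentiable, as shown after \eqref{eq:conformal h}), and each $S_t$ is homologous to $S_0$. Maximality of mass gives $\vol_{h_c^{\rho,\bar\rho}}(S_t)\leq\vol_{h_c^{\rho,\bar\rho}}(S_0)$ for all small $t$, hence $t=0$ is a critical point of $t\mapsto\vol_{h_c^{\rho,\bar\rho}}(S_t)$. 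The first variation of volume formula for a spacelike submanifold in a pseudo-Riemannian manifold reads $\frac{d}{dt}\big|_{t=0}\vol_h(S_t)=-\int_{S_0}\langle \vec H,X\rangle_h\,d\vol_h$, where $\vec H$ is the mean curvature vector and $X$ the variation field; since this vanishes for every compactly supported $X$, and the induced metric on the spacelike $S_0$ is positive definite (so pairings against normal fields are nondegenerate), we conclude $\vec H\equiv 0$ on the $C^1$ locus.

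The main point requiring care is ensuring the admissibility of the competitor surfaces $S_t$: one must check they stay spacelike, stay inside the smooth region $N$, and define honest currents homologous to $S_0$ for small $t$ — all of which follow from compact support of the variation, smoothness and openness of the spacelike condition near $t=0$, and shrinking the neighborhood if necessary. A secondary subtlety is that the first-variation formula for volume carries the usual sign in the Riemannian case but the sign is immaterial here: we only use that a two-sided extremum forces the first variation to vanish, and the nondegeneracy of the positive-definite induced metric on the spacelike graph then forces $\vec H=0$. (Equivalently, one may simply cite that a calibrated submanifold is a critical point of the volume functional, run through the same Stokes-theorem argument that underlies Definition~\ref{def:calibration} localized to the $C^1$ piece, and read off vanishing mean curvature.)
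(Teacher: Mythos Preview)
Your proposal is correct and follows essentially the same approach as the paper, which dispatches the corollary in a single sentence (\emph{``We can now compare a smooth space-like submanifold to smooth variations of the submanifold, and we conclude:''}). You have simply spelled out the details implicit in that sentence: localize via the remark that restrictions of optimal maps remain optimal, identify mass with volume on the $C^1$ locus via the earlier Proposition, perturb by compactly supported variations (which stay spacelike and homologous), and read off $\vec H=0$ from the vanishing first variation.
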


\begin{remark}
\label{rmk:curvature} We \ conclude with a remark on the Ma-Trudinger-Wang
cost-curvature condition \cite[A3 condition]{MTW}. \ The first two authors
\cite{KM} expressed the MTW\ condition as a curvature condition on $h_{c},$
which may be described as follows:\ \ \ The weak (strong) MTW
A3\ condition\ holds if and only if, {\ in each coordinate chart on $M$ and
$\bar{M}$, the Riemannian sectional curvature $R_{i\bar{j}\bar{j}i}$
corresponding to any vanishing component $(h_{c})_{i\bar{j}}=0$ of the metric
tensor is nonnegative (positive).}

With respect to the conformal metric $h^{\rho,\bar{\rho}}$, we have the
Riemann curvature tensor
\[
R_{i\bar{j}\bar{j}i}^{\rho,\bar{\rho}}=\big{(}\frac{\pi^{\ast}\rho\wedge
\bar{\pi}^{\ast}\bar{\rho}}{d\vol_{h_{c}}}\big{)}^{\frac{1}{n}}(R_{i\bar
{j}\bar{j}i}+\Lambda_{i\bar{j}}(h_{c})_{\bar{j}i}-\Lambda_{ii}(h_{c})_{\bar
{j}\bar{j}}+\Lambda_{\bar{j}i}(h_{c})_{i\bar{j}}-\Lambda_{\bar{j}\bar{j}%
}(h_{c})_{ii})
\]
for some $\Lambda_{ij\text{ }}$involving derivatives of the conformal factor.
\ $\ ${\ For metric components $(h_{c})_{i\bar{j}}=0$, we easily see the
corresponding component of the Riemann tensor} is given by
\[
R_{i\bar{j}\bar{j}i}^{\rho,\bar{\rho}}=\left(  \frac{\rho(x)\bar{\rho}%
(F(x))}{\left\vert \det(D\bar{D}c)\right\vert }\right)  ^{1/n}R_{i\bar{j}%
\bar{j}i}.
\]
Thus, the weak (strong) MTW A3\ condition\ holds if and only if, {in each
coordinate chart on $M$ and $\bar{M}$,} whenever $h_{i\bar{j}}{}^{\rho
,\bar{\rho}}=0,$ the sectional curvature $R_{i\bar{j}\bar{j}i}^{\rho,\bar
{\rho}}$ is nonnegative (positive).
\end{remark}

\end{document}